\newenvironment{smallermatrix}[1][c]
{\null\,\vcenter\bgroup
  \Let@\restore@math@cr\default@tag
  \baselineskip0pt \lineskip0.4pt \lineskiplimit0pt
  \ialign\bgroup\if#1l\else\hfil\fi$\m@th\scriptstyle##$\if#1r\else\hfil\fi&&\thickspace\hfil
  $\m@th\scriptstyle##$\hfil\crcr
}{%
  \crcr\egroup\egroup\,%
}
\NewDocumentCommand{\ts}{O{c} e{^?_}}{
  \begin{smallermatrix}[#1]
  \mathstrut\IfValueT{#2}{#2} \\
  \mathstrut\IfValueT{#3}{#3} \\
  \mathstrut\IfValueT{#4}{#4}
  \end{smallermatrix}%
}
\newtheorem{thm}{Theorem}
\newtheorem{prop}[thm]{Proposition}
\newtheorem{definition}[thm]{Definition}
\newcommand\reallywidehat[1]{%
\savestack{\tmpbox}{\stretchto{%
  \scaleto{%
    \scalerel*[\widthof{\ensuremath{#1}}]{\kern-.6pt\bigwedge\kern-.6pt}%
    {\rule[-\textheight/2]{1ex}{\textheight}}
  }{\textheight}%
}{0.5ex}}%
\stackon[1pt]{#1}{\tmpbox}%
}
\DeclareFontFamily{U}{wncy}{}
    \DeclareFontShape{U}{wncy}{m}{n}{<->wncyr10}{}
    \DeclareSymbolFont{mcy}{U}{wncy}{m}{n}
    \DeclareMathSymbol{\Sh}{\mathord}{mcy}{"58}
\begin{document}
\title{The de Rham period map for punctured elliptic curves and the KZB equation}
\author{Ben Moore}
\address{Mathematics Institute, University of Warwick, CV4 7AL, UK, and School of Mathematics,
University of Edinburgh, King’s Buildings, Edinburgh, UK}
\date{May 2023}

\maketitle

\begin{abstract}
    We demonstrate that the algebraic KZB connection of Levin--Racinet and Luo on a once-punctured elliptic curve represents Kim's universal unipotent connection, and we observe that the Hodge filtration on the KZB connection has a particularly simple form. This allows us to generalise previous work of Beacom by writing down explicitly the maximal metabelian quotient of Kim's de Rham period map in terms of elliptic polylogarithms. As far as we are aware this is the first time that the de Rham period map has been written out for an infinite dimensional quotient of the de Rham fundamental group on any curve of positive genus.
\end{abstract}

\section{Introduction}
Let $C/K$ be a geometrically smooth complete curve of genus $g$ over a local field $K$ with good reduction over the residue field $k$. Write $X$ for the hyperbolic curve $C-D$, where $D$ is a finite subset of cardinality $r\geq 3-2g$. As part of his program \cite{motivicfundP1, massey, masseyerratum, bdckw} for understanding hyperbolic curves over number fields via \emph{Selmer schemes}, Kim has defined a \emph{de Rham period map} which associates to each $K$-valued point of $X$ an element inside a certain moduli space of torsors over $X$ (here, $K$ is the completion of the number field at some finite place of good reduction). The entire Selmer scheme apparatus is controlled by the $K$-prounipotent completion of the motivic fundamental group $U$ of $X$, which is an infinite dimensional $K$-prounipotent group, and the de Rham period map is controlled by the de Rham realisation $U^{dR}$ of $U$.

Previous work \cite{quadchab, quadchabI, split13} has generally focussed on the Selmer schemes induced by certain small finite-dimensional quotients of $U$, such as its descending central series quotients $U_n$ for $n\leq 3$, but such quotients contain nontrivial information about $X$ only when $n$ exceeds some (increasing) function of the rank of the Jacobian of $X$. It is therefore necessary (and natural) to try to understand Selmer schemes for $U$ itself --- or more realistically, for algebraically well-behaved infinite-dimensional quotients of $U$, such as its maximal metabelianisation $W=U/[[U,U],[U,U]]$. In this article we give a complete description of the de Rham period map for $W^{dR}$ in the simplest nontrivial case, namely for $X$ a once-punctured elliptic curve.

The target of the de Rham period map is the affine space $F^0U^{dR}\backslash U^{dR}$, where $F^{\bullet}$ is the \emph{Hodge filtration} on $U^{dR}$. In general, we have $F^0U^{dR}=\exp\left(F^0 \mathfrak{u}\right)$, where $\mathfrak{u}$ is the Lie algebra of $U^{dR}$ and $F^{0}\mathfrak{u}$ is a subspace. For the purposes of applications to Selmer schemes, it is necessary to understand the composition of the period map with a coordinate $\iota: F^0U\backslash U\rightarrow \mathfrak{u}$. In general, one obtains such a coordinate $\iota$ from a splitting $\mathfrak{u}=F^{0}\mathfrak{u}\oplus M$ of $K$-vector spaces, with $M$ a Lie ideal, simply by observing that the composite \begin{equation*}
    M\xrightarrow{\mathrm{exp}}U \twoheadrightarrow F^0 U\backslash U
\end{equation*} is bijective, and defining $\iota$ to be its inverse.

When $X$ is the thrice-punctured line, $F^0$ is trivial, and so $\iota$ is simply the logarithm. It is easy to compute the image in $W$ of the logarithm of the de Rham period map; however, most authors choose to compute its image in Deligne's polylogarithmic quotient \cite{deligneP1}, which is even smaller than $W$. For a once-punctured elliptic curve $X=E-\{\infty\}$, $\mathfrak{u}$ is the free Lie algebra on $A$ and $B$ ($A$ corresponds to an algebraic $1$-form on $E$ and $B$ to an algebraic $1$-form on $X$). Furthermore $F^0\mathfrak{u}$ is one-dimensional. Choosing a Lie ideal $M$ allows us to define the projection $\pi_{F^0\mathfrak{u}}$ of $\mathfrak{u}$ onto $F^0\mathfrak{u}$, and we can check that the map \begin{equation*}
U \ni x\mapsto\log\left(\exp\left(-\pi_{F^0\mathfrak{u}}(\log(x))\right)x)\right) \in M
\end{equation*} descends to $F^0U\backslash U$ and is equal to $\iota$. It is natural to wonder whether our results are contained in the earlier article of Beacom \cite{beacom}, which is dedicated to the de Rham period map for elliptic and hyperelliptic curves. The answer is that Beacom describes a recursive algorithm for computing the de Rham period map attached to $U_n$ from the one attached to $U_{n-1}$, but the algorithm is sufficiently complicated that it is infeasible to use it to directly write down the period map for every $U_n$.

We circumvent this difficulty by exploiting the fact, which seems to be generally under-appreciated, that the presentation of $F^0 U^{dR}$, and hence $\iota$, depends on a choice of representative for the \emph{universal connection} on $X$. Beacom, along with all other writers on the subject, chooses the so-called \emph{naive connection} $\nabla_{0}$, for which the \emph{crystalline path torsor} appearing in the period map assumes a particularly simple form. But the drawback to this approach is that the Hodge filtration is harder to describe: for example, at level four, we have \begin{equation*}
F_{\nabla_0}^0\mathfrak{u}_4=\mathrm{Span}_{K}\left\{B-\lambda[B,[A,B]]+(\mu+\kappa/3)[B,[B,[A,B]]]\right\}, \quad \lambda, \mu, \kappa \in K.
\end{equation*} However, we choose to employ (a variant of) the algebraic KZB connection $\nabla_{KZB}$ of Levin--Racinet and Luo. There is a unique (pointed) isomorphism between these (pointed) connections, but this variant of the KZB connection enjoys a number of highly desirable properties. Among them is the particularly simple form taken by its the Hodge filtration: \begin{equation*}
F_{\nabla_{KZB}}^0\mathfrak{u}=\mathrm{Span}_{K}\{B\}.
\end{equation*} Now there is a canonical choice for $M$: namely, the Lie ideal spanned by Lie words of degree at least $1$ in $A$. Then $\iota$ takes the form \begin{equation}\label{eq:coordonperiod}
    \iota_{KZB}(x)=\log\left(\exp\left(-\pi_{\mathrm{Span}_{K}\{B\}}(\log(x))\right)x)\right).
\end{equation} Finally, we note that as for the thrice-punctured line, it is very convenient to pass to $W$, in order to make the calculations algebraically tractable. We require descriptions of the flat section of the (adjoint) of the KZB equation, of the logarithm of this flat section, and of the Baker--Campbell--Hausdorff formula, and all of these are rather difficult to obtain over $U$.

Before we state the main result of the article, we need to introduce some algebraic functions on our punctured elliptic curve. Let $E/K$ be an elliptic curve over a local field $K$ of characteristic zero, with good reduction over the residue field $K$. Let $X=E-\{\infty\}$ denote the hyperbolic curve, with model \begin{equation*}
        y^2=4x^3-60e_4 x-140 e_6
    \end{equation*} obtained by removing the point at infinity and let $b \in E(K)$ be a basepoint. The Lie algebra $\mathfrak{w}$ of the maximal metabelian quotient of the de Rham fundamental group of $X$ based at $b$ is spanned by $A$, $B$ and $\sigma_{r,s}=\mathrm{ad}^{r}_{B}\mathrm{ad}^{s}_{A}[A,B]$ for $r,s\geq 0$ and we can choose these so that the isomorphism $\mathfrak{w}^{\mathrm{ab}}\cong H^{dR}_1(X/K)$ identifies the dual of $A$ with the global regular differential $\alpha=dx/y$ and the dual of $B$ with the differential form $\beta=xdx/y$ which is regular away from $\infty$.  Let $e_{n}$ be $1/(2n+1)$ times the $(n-2)$th Laurent series coefficient of the Weierstrass function for $E$: alternatively, the $e_n$ are characterised by $e_n=0$ for $n\geq 3 $ odd and \begin{equation*}
\frac{1}{3}(m-3)(4m^2-1)e_{2m}=\sum_{r=2}^{m-2}(2r-1)(2m-r-1)e_{2r}e_{2m-2r}
    \end{equation*} for $m\geq 4$. Then let $P_1=-2x^2/y$, $P_2=x$, $P_3=-y/2$ and for $m,n \geq 2$, define $P_m$ by \begin{equation*}
        P_m P_n-P_{m+n}=(-1)^n \sum_{k=1}^{m-2}\binom{n+k-1}{k}e_{n+k}P_{m-k}+(-1)^m \sum_{k=1}^{n-2}\binom{m+k-1}{k}e_{m+k}P_{n-k}+(-1)^m\binom{m+n}{m}e_{m+n}.
    \end{equation*} We recall the Bernoulli numbers $B_{m}$ with the convention \begin{equation}\label{eq:bernoulli}
        B_{0}=-1, B_{1}=\frac{1}{2},B_{2}=\frac{1}{6},\dots.
    \end{equation}
    \begin{thm}\label{thm:periodmap}
     We have \begin{equation}\label{eq:periodeq}
\iota_{KZB}\circ j^{dR}(X,b)(x)=\int_{b}^{x}\alpha A+\sum_{r,s\geq 0}g_{r,s}\sigma_{r,s}-\int_{b}^{x}\beta S\left(\int_{b}^{x}\alpha\mathrm{ad}_{A}, \int_{b}^{x}\beta\mathrm{ad}_{B}\right)\left(-\int_{b}^{x}\alpha \sigma_{0,0}+\sum_{r,s\geq 0}g_{r,s}\sigma_{r+1,s}\right)
   \end{equation} where $j^{dR}(X,b)(x)$ is the de Rham period map, $\iota_{KZB}$ is the image inside $\mathfrak{w}$ of the coordinate described above, and \begin{equation*}
       S(U,V)=\frac{1}{U+V}\left(1-\frac{\exp(-V)-1}{V}\frac{U+V}{\exp(U+V)-1}\right).
   \end{equation*} The $g_{r,s}$ are iterated integrals of certain algebraic functions on $E/K$, characterised by the equality of formal power series \begin{equation*}
       \sum_{r,s\geq 0}g_{r,s} X^s Y^r=\sum_{r,s\geq 0}\left(\sum_{\substack{u+n=r\\v+m=s}}\frac{B_{u+v}\left(\int_{b}^{x}\beta\right)^u\left(\int_{b}^{x}\alpha\right)^v G_{n,m}}{u!v!}\right)X^s Y^r-\left(\int_{b}^{x}\alpha\right) \left(\int_{b}^{x}\beta\right) T\left(\int_{b}^{x}\alpha X, \int_{b}^{x}\beta Y\right),
   \end{equation*} where \begin{equation*}
       T(U,V)=\frac{1}{U}\left(1-\frac{U+V}{V}\frac{\exp(U)-1}{\exp(U+V)-1}\right)
   \end{equation*} and $B_{m}$ is the $m$th Bernoulli number from (\ref{eq:bernoulli}). The algebraic iterated integrals $G_{r,s}$ are in turn characterised as the iterated integral of the coefficient of $X^r Y^s$ in 
\begin{equation*}
     -\left(1-\alpha Y-\beta X\right)^{-1}\left(1+(1-\beta X)\sum_{i\geq 0}\sum_{k=2}^{i}\beta^{i-k+1}\alpha p_k X^i\right),
\end{equation*} where we define the algebraic functions $p_k$ by \begin{equation*}
p_n=\sum_{2a_2+3a_3+\dots+na_n=n}\frac{1}{a_2! a_3!\dots a_n!}\prod_{k=2}^{n}\left(\frac{(-1)^{k+1}P_k}{k}\right)^{a_k}.
    \end{equation*}
\end{thm}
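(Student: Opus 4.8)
The plan is to identify the de Rham period map with the unipotent holonomy of the KZB connection and then to evaluate the coordinate (\ref{eq:coordonperiod}) by a purely Lie-theoretic computation. By the earlier identification of $\nabla_{KZB}$ with Kim's universal unipotent connection, the group-like element $g=j^{dR}(X,b)(x)$ is represented by the flat section of $\nabla_{KZB}$ from $b$ to $x$, so its $x$-dependence is captured entirely by the iterated integrals of the connection form, and the remaining task is to compute $\log g$ in $\mathfrak{w}$, project out its $B$-component, and resolve the resulting Baker--Campbell--Hausdorff expression. First I would write the KZB connection one-form explicitly in the basis $\{A,B,\sigma_{r,s}\}$: this is where the algebraic functions $P_m$ and their exponential repackagings $p_k$ enter, since they are its coefficients, and establishing the closed resolvent series $-(1-\alpha Y-\beta X)^{-1}(\cdots)$ for the resulting iterated integrals $G_{r,s}$ is the first computation.

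The structural fact that makes everything tractable is that $\mathfrak{w}$ is metabelian, so the ideal $[\mathfrak{w},\mathfrak{w}]=\mathrm{Span}_K\{\sigma_{r,s}\}$ is abelian and $\mathrm{ad}_{[A,B]}$ acts on it as zero; consequently $\mathrm{ad}_A$ and $\mathrm{ad}_B$ \emph{commute} as operators on this ideal. Every subsequent manipulation --- solving the flat-section equation, taking logarithms, and applying BCH --- therefore reduces to a calculus of two-variable commutative power series in $X\leftrightarrow\mathrm{ad}_A$ and $Y\leftrightarrow\mathrm{ad}_B$. Solving $dg=\omega g$ in $W$ then decouples: the abelian projection yields the scalar holonomies $\int_b^x\alpha$ and $\int_b^x\beta$, while the triangular adjoint action on the ideal integrates, after geometric resummation of the resolvent $(1-\alpha Y-\beta X)^{-1}$, to the generating function for the $G_{r,s}$.

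Next I would pass to the logarithm and then to the coordinate. The $\sigma_{r,s}$-coefficients of $L=\log j^{dR}(X,b)(x)$ are precisely the $g_{r,s}$; since $\mathrm{ad}_A,\mathrm{ad}_B$ commute on the ideal, the logarithm converts the section coefficients $G_{n,m}$ into the $g_{r,s}$ through the operator $\frac{\mathrm{ad}}{e^{\mathrm{ad}}-1}$, whose expansion $t/(e^t-1)=\sum_m B_m t^m/m!$ produces exactly the convolution $\sum_{u+n=r,\,v+m=s}\frac{B_{u+v}(\int\beta)^u(\int\alpha)^v G_{n,m}}{u!v!}$ with Bernoulli numbers (\ref{eq:bernoulli}), while the abelian commutators $[\int\alpha A,\int\beta B]=\int\alpha\int\beta\,\sigma_{0,0}$ and their iterates supply the cross-term correction $T$. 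Finally, $\iota_{KZB}(g)=\log(\exp(-\pi_{\mathrm{Span}\{B\}}(\log g))\,g)$ amounts to forming the BCH product of $-(\int_b^x\beta)B$ with $L$: this cancels the $B$-component, preserves $\int_b^x\alpha\,A+\sum g_{r,s}\sigma_{r,s}$, and adds higher brackets that, because $[-(\int\beta)B,L]=-\int\beta(-\int\alpha\,\sigma_{0,0}+\sum g_{r,s}\sigma_{r+1,s})$, resum into the factor $-\int_b^x\beta\,S(\int_b^x\alpha\,\mathrm{ad}_A,\int_b^x\beta\,\mathrm{ad}_B)$ applied to that vector, reproducing (\ref{eq:periodeq}).

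The hard part will be producing the two closed forms $S$ and $T$. Both are Bernoulli generating functions of the composite $\frac{e^{-V}-1}{V}\cdot\frac{U+V}{e^{U+V}-1}$ type, and the difficulty is to resum the Bernoulli contributions across all bidegrees simultaneously, rather than to check a separate identity at each degree. Metabelianness guarantees that each bidegree is a finite sum of iterated integrals, so some closed form must exist; the real work is to find and verify the correct two-variable generating identity, tracking how the logarithm and the projection $\pi_{\mathrm{Span}\{B\}}$ each interact with the operator $\frac{\mathrm{ad}}{e^{\mathrm{ad}}-1}$ on the commuting pair $\mathrm{ad}_A,\mathrm{ad}_B$. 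I expect this to force working with the series in $X,Y$ from the outset, since it is exactly this interaction that is delicate.
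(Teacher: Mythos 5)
Your proposal is correct and follows essentially the same route as the paper: KZB as the universal connection, the metabelian reduction to two-variable commutative power series in $\mathrm{ad}_A,\mathrm{ad}_B$, the resolvent generating function for the $G_{r,s}$ (Proposition \ref{prop:flatad}, which the paper carries out on the adjoint connection $\mathrm{Ad}(\nabla_{KZB})$), the Bernoulli/$T$ identity for passing from the flat-section coefficients to the $g_{r,s}$ (Proposition \ref{prop:logarithm}), and the final BCH step against $-\int_b^x\beta\, B$. The only real divergence is that the step you flag as the hard part --- producing the closed form $S$ --- is dispatched in the paper simply by quoting Kurlin's closed-form metabelian Baker--Campbell--Hausdorff formula \cite{kurlin} and substituting $\mathrm{ad}_Y\mapsto\int_b^x\alpha\,\mathrm{ad}_A+\int_b^x\beta\,\mathrm{ad}_B$, so no new resummation needs to be invented there.
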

In Theorem \ref{thm:tangperiodmap}, we prove a similar result in the case that $b$ is a tangential basepoint at $\infty$.

We note, in passing, that our KZB connection is a connection on a bundle of cocommutative Hopf algebras, whereas the KZB connections used by Levin--Racinet \cite{levinracinet}, Hain \cite{hainkzbnotes}, Luo \cite{luo}, Brown--Levin \cite{brownlevin} and others are closely related connections on a bundle of Lie algebras. This latter KZB connection is important primarily because it is the specialisation to $E$ of the unique logarithmic connection over the moduli stack of elliptic curves satisfying certain properties \cite[Corollary 14.4]{hainkzbnotes}. With respect to higher genus curves, Enriquez has defined a KZB connection on Schottky covers of configuration spaces of complex curves \cite{enriquezkzb}, and Enriquez--Zerbini have given (in principle) gauge transformations under which this connection descends to the curve \cite{enriquezzerbini}, but at the time of writing these constructions haven't been worked out explicitly for any curve of genus at least $2$. In fact it is not known if their gauge transformations yield an algebraic connection.

However, for the purposes of writing down the de Rham period map on hyperbolic curves $X=C-P$, where $C$ is a complete curve of genus $g\geq 2$ and $P \in C$, with respect to the maximal metabelian quotient $\mathfrak{w}$ of $\mathfrak{u}$, it suffices to construct any connection of a certain form, which ensures that generalisations of Propositions \ref{prop:kzbuniversal} and \ref{prop:hodgefilt} hold. Then, straightforward generalisations of Propositions \ref{prop:flatad} and \ref{prop:logarithm} to the maximal metabelian Lie algebra on $n=2g$ generators $A_1,B_1\dots,A_g,B_g$, \begin{equation*}
\mathfrak{w}_{n}=\mathrm{Span}_{K}\left\{X_1,\dots X_n,\sigma_{\mathbf{u}}\rvert_{i,j}\right\}_{1\leq i<j\leq n},\quad \mathbf{u}=(0,\dots,0,u_i,\dots, u_n),\quad \sigma_{\mathbf{u}}\rvert_{i,j}=\mathrm{ad}_{X_{n}}^{u_n}\dots\mathrm{ad}_{X_{1}}^{u_1}[X_i,X_j],
\end{equation*} would yield analogous formulae for the period map. Specifically, cover $C$ by two open subsets $X=C-P$ and $Y$: then we may construct our connection by specifying two $\mathfrak{w}$-valued one-forms $\omega_X$ and $\omega_Y$ on $X$ and $Y$ together with a gauge transformation $g$ such that $\omega_Y=-dg.g^{-1}+g\omega g^{-1}$ on $X\cap Y$. Then the obvious analogues of Propositions \ref{prop:kzbuniversal} and \ref{prop:hodgefilt} hold if:
\begin{enumerate}
    \item $\omega_X=\sum_{i=1}^{2g}\left(\alpha_iA_i+\beta_i B_i\right)$ modulo higher degree words (the $\alpha_i$ and $\beta_i$ are a basis for $H^1_{dR}(X/K)$ such that the $\alpha_i$ extend to global $1$-forms on $X$);
    \item $\omega_X$ and $\omega_Y$ take values in the (not necessarily Galois-equivariant) Lie subalgebra $\mathfrak{g}$ on $\mathfrak{u}$ consisting of Lie words of degree at most $1$ in $A_1,\dots,A_g$; and
    \item $g$ acts as left multiplication by words in the $B_i$s.
\end{enumerate}
The period map for the completion of $C$ is obtained by quotienting by the relation $[A_1,B_1]+\dots+[A_g,B_g]=0$.
\section{Definitions}
This section is devoted to recalling the definition of the de Rham period map. We make definitions for general smooth $K$-schemes as far as possible, restricting to hyperbolic curves only when necessary. We will use the word ``connection'' throughout to refer to a vector bundle equipped with a connection.

Let $Z/K$ be a scheme over a field of characteristic zero. Let $D$ be a divisor of $Z$ with normal crossing singularities, and set $Z^{\prime}=Z-D$. Let $\mathrm{UnFC}(Z^{\prime})$ denote the Tannakian category of unipotent flat connections over $Z^{\prime}$: that is, the objects are connections that may be constructed as successive extensions of the trivial connection. Let $\mathrm{UnFC}(Z)(\log D)$ denote the category of flat unipotent logarithmic connections on $Z$. These are the flat meromorphic connections on $Z$ that are holomorphic outside $D$ and have at worst logarithmic singularities within $D$. Deligne has shown that every connection $\nabla$ over in $\mathrm{UnFC}(Z^{\prime})$ with a meromorphic extension to $Z$ has a canonical extension $\overline{\nabla}$ to a connection in $\mathrm{UnFC}(Z)(\log D)$. Furthermore, for $P\in D$, any such logarithmic connection on $C$ induces a connection, called the \emph{residue connection}, on $T_{P} Z\setminus\{0\}$. 

For any $K$-schemes $S$, let $Z_S$ denote the base-change of $Z$ to $S$, and write $\mathrm{Vect}_{S}$ for the category of vector bundles on $S$.  Let $b$ be a $K$-rational basepoint of $Z^{\prime}$, meaning that $b\in Z^{\prime}(K)$, or a tangential basepoint at $P \in D$, in which case $b\in (T_{P}Z)(K)$. When $b$ is a rational basepoint, there is an obvious fibre functor $F_b(S): \mathrm{UnFC}(Z^{\prime}_S)\rightarrow \mathrm{Vect}_S$ defined by sending a connection $\mathscr{E}$ to its fibre over $b$. When $b$ is a tangential basepoint at $P$, we define the fibre functor $F_b(S): \mathrm{UnFC}(Z_S)(\log(D))\rightarrow \mathrm{Vect}_{S}$ by sending a connection on $Z_S$ to the fibre of the residue connection $\mathrm{Res}\left(\mathscr{E}\right)$ on $T_{P} Z_S-\{0\}$ over $b$.

When $b$ is a rational basepoint, a \emph{pointed} unipotent flat connection on $Z^{\prime}$ (based at $b$) is a connection $(\mathscr{E},\nabla)$ in $\mathrm{UnFC}(Z^{\prime})$ together with an element $v \in \mathscr{E}_{b}$. When $b$ is a tangential basepoint, a pointed unipotent flat connection is a connection $(\mathscr{E},\nabla)$ in $\mathrm{UnFC}(Z)(\log(D))$ together with an element $v \in \mathrm{Res}\left(\mathscr{E}\right)_{b}$. In either case there exists a \emph{universal} unipotent pointed flat connection $(\mathscr{E}_0,\nabla_0,v_0)$: that is, for any other unipotent pointed flat connection $(\mathscr{E},\nabla,v)$, there exists a unique isomorphism of connections $\phi$ such that the underlying isomorphism $\mathscr{E}_{0} \rightarrow \mathscr{E}$ of $K$-vector bundles on $X$ satisfies $\phi_b(v_0)=v$. The universal pointed unipotent flat connection is unique up to a unique isomorphism.

The \emph{de Rham fundamental group} of $Z^{\prime}$ based at $b$, denoted $U^{dR}(Z^{\prime},b)$ (or simply $U$, though the dependence on $b$ is crucial), is defined to be the $K$-group scheme representing the functor $S\mapsto\mathrm{Aut}^{\otimes}F_b(S)$ of tensor-compatible automorphisms. Similarly, for $z^{\prime}\in Z^{\prime}$, the functor $S\mapsto \mathrm{Isom}^{\otimes}(F_b(S),F_{z^{\prime}}(S))$ of tensor-compatible isomorphisms is represented by a right $U^{dR}(Z^{\prime},b)$-torsor, denoted by $P^{dR}(Z^{\prime},b)$, which we call the \emph{de Rham path torsor}. On general grounds, $U^{dR}$ and $P^{dR}$ admit a certain filtration, called the \emph{Hodge filtration}: but we will see below that a lemma of Hadian provides a characterisation of the Hodge filtration which is rather more useful in practice. It is well-known that $U$ is a prounipotent group, so it is representable as the inverse limit of its descending central series quotients $U_n$, defined by \begin{equation*}
U^1=U,\quad U^n=[U,U^{n-1}],\quad U_n=U/U^{n+1}.
\end{equation*} The construction extends to produce descending central series quotients of its Lie algebra $\mathfrak{u}$, the universal enveloping algebra $\mathscr{U}(\mathfrak{u})$ of its Lie algebra, the de Rham path torsor, and unipotent connections on $Z^{\prime}$. We even have descending central series quotients for all of the quotients of these objects induced by quotients of $U$. The $n$th level quotient by the descending central series of any of these objects is always denoted with a subscript index $n$.

A logarithmic filtration $F^{\bullet}$ on a logarithmic connection $(\mathscr{E},\nabla) \in \mathrm{UnFC}(Z)(\log D)$ with poles along $D$ is a filtration of $\mathscr{E}$ by subbundles $F^i\mathscr{E}$ such that \begin{equation*}
    \mathscr{E}=F^m\mathscr{E}\supseteq F^{m+1}\mathscr{E}\supseteq\dots\supseteq F^n\mathscr{E}=0
\end{equation*} for some $m\leq n$ which satisfy Griffiths transversality: \begin{equation*}
\nabla\left(F^i\mathscr{E}\right)\subseteq F^{i-1}\mathscr{E}\otimes\Omega_{Z/K}^{1}\left(\log(D)\right).
\end{equation*} When the connection in question is pointed, with a marked element $v \in \mathscr{E}_{b}$, we additionally require that $v \in \left(F^{0}\mathscr{E}\right)_{b}$. The logarithmic extension of the universal connection inherits a logarithmic filtration, also called the \emph{Hodge filtration}, from the one on $U^{dR}$ mentioned earlier.

From now on, suppose that $K$ is a nonarchimedean field with residue field $k$, and that $Z^{\prime}$ has good reduction over $k$. Let $Z^{\prime}_k$ be the reduction of $Z^{\prime}$ to $k$ and write $\mathrm{UnIsoc}(Z^{\prime}_k)$ for the category of overconvergent unipotent isocrystals on $Z^{\prime}_k$. For $c \in Z^{\prime}_k(k)$, let $]c[$ denote the residue disk of $c$. The fibre functor $e_c: (\mathscr{E},\nabla)\mapsto \mathrm{ker}(\nabla)_{]c[}$, taking an element of $\mathrm{UnIsoc}(Z^{\prime}_k)$ to the $k$-vector space of its flat sections over $]c[$, provides $\mathrm{UnIsoc}(Z^{\prime}_k)$ with a Tannakian structure. The crystalline fundamental group based at $c$, denoted $U^{\mathrm{cr}}(Z^{\prime}_k,c)$, is the group scheme over the fraction field of the ring of Witt vectors of $k$ representing the tensor-compatible automorphisms, and the crystalline path torsor is the right $U^{\mathrm{cr}}(Z^{\prime}_k,c)$-torsor representing the tensor-compatible isomorphisms. If $c$ is instead a tangential basepoint, then there are suitable parallel definitions due to Besser--Furusho \cite{besserfurusho}. By a comparison theorem of Chiarellotto between crystalline and de Rham fundamental groups, the Frobenius automorphism on $U^{cr}$ induces a Frobenius automorphism on $P^{dR}(Z^{\prime},z^{\prime})$ for any $z^{\prime} \in Z^{\prime}$.

When $z^{\prime}, b \in Z^{\prime}(K)$, Besser \cite{besser} shows that there is a unique Frobenius-invariant de Rham path $p^{cr}(z^{\prime})$ from $b$ to $z^{\prime}$. When $z^{\prime}$ and $b$ are tangential basepoints, Besser--Furusho \cite{besserfurusho} prove that the existence of a unique Frobenius invariant de Rham path generally depends on a choice of a branch of the $p$-adic logarithm. However, if $z^{\prime}$ is an integral point, then a choice of branch of the $p$-adic logarithm is not required.

Kim defines \cite{unipalb} an \emph{admissible} right $U^{dR}$-torsor $T$ over $Z^{\prime}$ to be a right $U^{dR}$-torsor $T$ over $Z^{\prime}$ with a number of extra structures: an Eilbenberg--Maclane filtration on $\mathcal{O}(T)$, a Hodge filtration $F^{\bullet}$ on $\mathcal{O}(T)$ and a Frobenius automorphism $\phi:T \rightarrow T$ of $Z^{\prime}$-schemes. These structures are required to be compatible in various ways (with each other and with the Hodge filtration on $U^{dR}$), and there must be a $\phi$-invariant element $p^{cr}_{T}\in T(Z^{\prime})$ and a Hodge element $p^H_{T}\in F^0 T(Z^{\prime})$. When $Z^{\prime}$ is affine, there is a unique $p^{cr}_{T}$: otherwise, we may remove a point $P_0$ and make use of the functoriality of the Hodge filtration. Thus, once a point $p^{H}_{T}$ is chosen, there is a unique $u_T \in U^{dR}(Z^{\prime})$ such that $p^{cr}_{T}=p^{H}_{T} u_T$. We write $[T]$ for the image of $u_T$ inside $F^0 U^{dR}(Z^{\prime})\backslash U^{dR}(Z^{\prime})$: it is independent of the choice of $p^H_{T}$. \begin{prop}
    For a $K$-scheme $Z^{\prime}$, the map $T\mapsto [T]$ defines a bijection from the space of isomorphism classes of admissible right $U^{dR}$-torsors over $Z^{\prime}$ to the affine $K$-space $F^0 U^{dR}\backslash U^{dR}$.
\end{prop}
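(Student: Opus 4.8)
The plan is to realise the correspondence as an orbit description, producing the inverse of $T\mapsto[T]$ by translating the Hodge structure of a fixed reference torsor. I would first record well-definedness. Since the Hodge filtration on $\mathcal{O}(T)$ is required to be compatible with the one on $U^{dR}$, the subscheme $F^0 T$ is a torsor under $F^0 U^{dR}$, so any two Hodge points satisfy $\tilde p^H_T=p^H_T h$ with $h\in F^0 U^{dR}(Z')$. Substituting into $p^{cr}_T=p^H_T u_T=\tilde p^H_T \tilde u_T$ gives $u_T=h\tilde u_T$, so $u_T$ and $\tilde u_T$ lie in the same left coset and $[T]\in F^0 U^{dR}\backslash U^{dR}$ is genuinely independent of the choice of $p^H_T$.

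The rigidity at the heart of the argument comes from the $\phi$-invariant section. When $Z'$ is affine the section $p^{cr}_T$ is unique by Besser's theorem, and in general one removes a point $P_0$ and appeals to functoriality of the Hodge filtration exactly as in the construction of $u_T$; in either case $p^{cr}_T$ is canonical and trivialises the underlying $U^{dR}$-torsor, identifying $T$ with the trivial torsor $U^{dR}_{Z'}$ and $p^{cr}_T$ with the identity section $e$. Under this identification the Hodge point becomes $p^H_T=u_T^{-1}$, so an admissible torsor is, up to the ambiguity already accounted for, determined by its Frobenius structure (rigidified by $p^{cr}_T=e$) together with its Hodge filtration (pinned down by $p^H_T=u_T^{-1}$ and compatibility with $F^{\bullet}U^{dR}$).

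Injectivity then follows: if $[T]=[T']$, I would trivialise both by their $\phi$-invariant sections and re-choose $p^H_{T'}$ within $F^0 T'$ so that $u_T=u_{T'}$ exactly; the unique torsor isomorphism sending $e\mapsto e$ then carries $p^{cr}_T$ and $p^H_T$ to $p^{cr}_{T'}$ and $p^H_{T'}$, and I would verify that it intertwines the Frobenius automorphisms and the Hodge and Eilenberg--MacLane filtrations, yielding an isomorphism of admissible torsors. For surjectivity I would fix a reference admissible torsor (the trivial torsor $U^{dR}_{Z'}$ with the Frobenius and Hodge structures inherited from $U^{dR}$, whose existence also furnishes non-emptiness), and for $g\in U^{dR}$ define $T_g$ by retaining the underlying torsor and the section $e$ while replacing the Hodge point by $g^{-1}$ and transporting the Hodge filtration accordingly. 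Then $p^{cr}_{T_g}=e=g^{-1}g=p^H_{T_g}g$, so $[T_g]$ is the class of $g$, and every coset is attained.

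The routine bookkeeping above conceals where I expect the real work to lie: checking that the trivialising isomorphism and the translated torsors $T_g$ respect the \emph{entire} package of admissibility data --- the Frobenius automorphism $\phi$, the Hodge filtration $F^{\bullet}$ on $\mathcal{O}(T)$, and the Eilenberg--MacLane filtration --- rather than merely the underlying $U^{dR}$-torsor. In particular, transporting the Hodge filtration along the translation must be shown to again satisfy Griffiths transversality and the compatibility with $F^{\bullet}U^{dR}$ that makes $F^0 T_g$ an $F^0 U^{dR}$-torsor, and one must confirm that the admissibility axioms force the Hodge filtration on $T$ to be determined by the Hodge point, which is precisely what makes $[T]$ a complete invariant. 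The uniqueness of $p^{cr}_T$, guaranteed once $Z'$ is affine, is what underlies the canonicity of the trivialisation and hence of the whole correspondence.
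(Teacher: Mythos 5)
The paper gives no proof of this proposition at all: it is imported verbatim from Kim \cite{unipalb}, where admissible torsors are defined, so there is no in-paper argument to compare yours against. Your reconstruction is nevertheless correct in structure and is essentially the standard (Kim's) argument: well-definedness from the $F^0U^{dR}(Z')$-ambiguity of the Hodge point, rigidification of the torsor by the unique Frobenius-invariant section $p^{cr}_T$, injectivity by matching the two rigidified structures once representatives are chosen with $u_T=u_{T'}$, and surjectivity by left-translating the Hodge filtration of the trivial torsor by $g^{-1}$ while keeping $p^{cr}=e$. The verifications you defer to the end are real but are exactly where the admissibility axioms do the work: compatibility of $\phi$ with the torsor structure forces the Frobenius on the trivialised torsor to be that of $U^{dR}$ (from $\phi(e\cdot u)=e\cdot\phi_{U}(u)$), and the filtered compatibility of the coaction $\mathcal{O}(T)\to\mathcal{O}(T)\otimes\mathcal{O}(U^{dR})$ forces the Hodge filtration to be the translate of the standard one by any point of $F^0T$ --- since the paper only says these structures are ``compatible in various ways'' without spelling them out, your outline is as complete as it can be at this level of detail, and the deferred checks would be discharged by unwinding Kim's definition rather than by any further idea.
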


For each $z^{\prime} \in Z^{\prime}$, Kim has shown (in loc. cit.) that $P^{dR}(z^{\prime})$ is an admissible right $U^{dR}$-torsor over $Z^{\prime}$. Finally we may define the de Rham period map.
\begin{definition}
Let $b$ be a rational or tangential basepoint for $(Z, D, Z^{\prime})$. Then the de Rham period map is given by \begin{equation*}
    j^{dR}(Z^{\prime},b): x \mapsto [P^{dR}(Z^{\prime},b)] \in F^0 U^{dR}\backslash U^{dR},
\end{equation*} associating a point $z^{\prime} \in Z^{\prime}(K)$ to the class of the de Rham path torsor inside the moduli space of admissible $U^{dR}$-torsors.
\end{definition} In practice (for example, when working with Selmer schemes), we work with the composition of the de Rham period map with a Lie algebra-valued coordinate $\iota:  F^0 U^{dR}\backslash U^{dR}=\exp\left(F^0\mathfrak{u}\right)\backslash\exp\left(\mathfrak{u}\right)\rightarrow \mathfrak{u}$. At this point, we replace $(Z,D, Z^{\prime})$ with the data $(C, D, X)$ of a hyperbolic curve from the introduction, and explain how to work out $\iota\circ j^{dR}(X,x)$ for each $x \in X(K)$.

First we need to construct unipotent connections on $X$, and for this it is necessary to choose some bases. Fix some $P_0 \in C$ such that $P_0 \in D$ if $D\neq \varnothing$. Select a basis $\alpha_1,\dots,\alpha_g$ of global algebraic differential forms for $H^1_{dR}(C/K)$ and a basis $\beta_1,\dots,\beta_g$ of meromorphic differential forms for $H^1_{dR}(C/K)$ which are algebraic outside $P_0$. Choose meromorphic differential forms $\gamma_1,\dots,\gamma_{r-1}$ with poles contained in $D$ such that the $\alpha$s, $\beta$s and $\gamma$s together form a basis of $H^1_{dR}(X/K)$. Then if $X$ is affine, $\mathscr{U}(\mathfrak{u})$ is isomorphic to \begin{equation*}
    H=K\langle\langle A_1,B_1\dots, A_g, B_g,C_1,\dots C_{r-1}\rangle\rangle,
\end{equation*} the free cocommutative Hopf algebra on the symbols $A_i$, $B_j$, $C_k$, and one can arrange things so that the canonical isomorphisms $H_{1}^{dR}(X/K)\cong \log(U_1)\cong U_1$ identify $\alpha_i$ with $\log(A_i)$, $\beta_j$ with $\log(B_j)$ and $\gamma_k$ with $\log(C_k)$. In fact, we can identify $U$ with the grouplike elements of $H$ and $\mathfrak{u}$ with the primitive elements. If $D=\varnothing$, so that $X$ is a complete projective curve of genus at least $2$, then one obtains results analogous to the affine case by quotienting by the relation $\sum_{i}[A_i,B_i]=0$. Returning to the affine case, Kim has proved (loc. cit.) that the connection \begin{equation*}
\nabla_0=d-\sum_{i=1}^{g}\left(A_i\alpha_i+B_i\beta_i\right)-\sum_{k=1}^{r-1}\gamma_k C_k
\end{equation*} on $\mathcal{O}_{X}\otimes H$, which we call the \emph{naive connection}, represents the universal pointed unipotent connection on $X$. Fix any representative $(\mathcal{O}_{X}\otimes H, \nabla)$ of the universal unipotent pointed connection with marked element $1 \in H$ (which we interpret as the fibre over $b$). We choose a primitive $p^{H}_{\nabla}(x)$ in the $K$-algebra generated by $\left(F^0 \nabla\right)_{x}$ and
define $\iota_{\nabla}(m)=\log\left(\exp\left(-p_{\nabla}^{H}(x)\right)m\right)$, so that \begin{equation}\label{eq:logdR}
    \iota_{\nabla}\circ j^{dR}_{\nabla}(x)=\log\left[\exp\left(-p_{\nabla}^{H}(x)\right)p^{cr}_{\nabla}(x)\right].
\end{equation}
If $\nabla$ is any trivialised representative of the universal unipotent pointed connection on $X$ with marked element $1 \in H$ in the fibre over $b$, and $b \in X(K)$ is a rational basepoint, then $p_{\nabla}^{cr}(x)$ is the value at $x$ of the unique flat section $G$ of $\nabla$ which begins $G=1+$ terms of degree $\geq 1$ in $A_i$, $B_j$ and $C_k$. If $b \in (T_{P}C)(K)$ is a tangential basepoint, let $Y$ be an open set containing $P$ over which the logarithmic extension of $\nabla$ is trivialised, and choose some $y \in X(K)\cap Y(K)$: then for $x \in X(K)- Y(K)$, \begin{equation*}
p_{\nabla}^{cr}(x)=g_{\nabla}^{-1}(y)p_{\nabla^{\prime}}^{cr}(y)+p_{\nabla}^{cr}(x),
\end{equation*} where $g_{\nabla}$ is the gauge transformation over $X\cap Y$ gluing $\nabla$ to the logarithmic connection $\nabla^{\prime}$ on $Y$.

We turn now to the problem of determining $p^{H}_{\nabla}$. A lemma of Hadian \cite[Lemma 3.6]{hadian} states that there is a filtration $F^{\bullet}$ of logarithmic subbundles (with poles contained in $D$) on the logarithmic extension $\overline{\nabla_{0}}$ of $(\mathcal{O}_{X}\otimes\mathscr{U}(\mathfrak{u}),\nabla_0)$ to $C$ such that the exact sequence \begin{equation*}
    0\rightarrow V^{\otimes n}_{dR}\otimes \mathcal{O}_{C} \rightarrow \left(\overline{\nabla_0}\right)_{n}\rightarrow \left(\overline{\nabla_0}\right)_{n-1}\rightarrow 0
\end{equation*} of pointed logarithmic connections becomes an exact sequence of filtered pointed logarithmic connections. Here, we write $V_{dR}$ for $H_{1}^{dR}(X/K)$ and freely identify it with $U_1$. 

The Hodge filtration on $V^{\otimes n}_{dR}\otimes \mathcal{O}_{C}$ is the tensor product filtration of the filtration on $V_{dR}$ inducing by the de Rham complex of $X/K$ and the trivial filtration on $\mathcal{O}_{C}$. Specifically, \begin{equation}\label{eq:HodgefiltAG}
   F^{-j}\left(V^{\otimes n}_{dR}\otimes \mathcal{O}_{C}\right)=\begin{cases}
        \mathrm{Span}_{\mathcal{O}_{C}}\left\{\text{words }w \text{ in }A_i\text{s, }B_j\text{s, s.t. }\sum_{i=1}^{g}\mathrm{deg}_{A_i}w\leq |j|\right\} & j\leq 0\\
        0 & j>0.
    \end{cases}
\end{equation}
Hadian's pointed filtration is unique up to a unique isomorphism, and is completely determined by a choice of representative for the isomorphism class of the universal pointed connection. Thus, the Hodge filtration on any representative of the universal unipotent pointed connection is the restriction of this filtration from the logarithmic extension. Based on this observation, Beacom \cite[Section 4.2]{beacom} provides an algorithm for recursively computing the Hodge filtration on $\left(\nabla_{0}\right)_n$ from the Hodge filtration on $\left(\nabla_{0}\right)_{n-1}$ for affine elliptic and hyperelliptic curves.

Thus, given a representative $\nabla$ for the universal unipotent pointed connection, the data of the logarithm of its fundamental solution together with its Hodge filtration $F^{\bullet}$ is sufficient to define Kim's de Rham period map. For the naive connection $\nabla_{0}$ on $X$, the fundamental solution is given in terms of iterated integrals as\begin{equation*}
    G(z)=1+\sum_{\substack{\text{words }w \\ \text{ in }A_i, B_j, C_k}}G_w(z) w, \quad G_{A_i B_{k}^{2} C_j}=\int_{b}^{z} \alpha_i \beta_k \beta_{k} \gamma_j\text{ etc},
\end{equation*} so the computation of the de Rham period map with respect to $\nabla_0$ reduces to the computation of the Hodge filtration, at least when $b$ is a rational basepoint. When $b$ is a tangential basepoint, more work is necessary, but when the endpoint $z$ is an integral point contained in the open set containing $b$, the period map is obtained simply by replacing $\nabla_0$ with its logarithmic extension over that open set. In particular there is no need to choose a branch of the $p$-adic logarithm. Beacom's recursive algorithm for computing the Hodge filtration on descending central series quotients gives rise to a recursive algorithm \cite[Sections 5 and 6]{beacom} for computing $\left(j^{dR}_{\nabla_0}\right)_n$ from $\left(j^{dR}_{\nabla_0}\right)_{n-1}$. Unfortunately, it it appears to be very difficult to use his algorithm to directly write down the full Hodge filtration on $U$, or indeed on any infinite dimensional quotient of $U$, so a straightforward description of the de Rham period map has proved elusive.

However, any choice of representative for the universal unipotent connection, giving rise to a coordinate $\iota_{\nabla}$ on the period space, is sufficient for applications to Selmer schemes. As we shall demonstrate later, it is quite possible to compute explicitly the logarithm of the crystalline parallel transport operator $p^{cr}_{\nabla}$ (or more truthfully, its image in a certain large quotient of $U$) for any connection $\nabla$. Thus, it is natural to seek representatives of the universal connection whose Hodge filtration is as simple as possible. We will demonstrate that when $C$ is an elliptic curve and $D=\{\infty\}$, the elliptic KZB connection, introduced by Levin--Racinet \cite{levinracinet} and developed further by Luo \cite{luo} meets this requirement.

\section{The KZB connection and the universal connection}
From now on, $C=E$ is an elliptic curve over a nonarchimedean field $K$, with good reduction over the residue field $k$ of $K$, and $X$ is the affine curve $E-\{\infty\}$. Fix an embedding $K\hookrightarrow \mathbb{C}$. 

Without loss of generality, $X$ is given by the equation \begin{equation*}
y^2=4x^3-60 e_4 x-140 e_6,
\end{equation*} for some $e_4, e_6 \in K$. Let $\alpha=dx/y$ and $\beta=xdx/y$ so that $\{\alpha,\beta\}$ is a $K$-basis for $H^1_{dR}(X/K)$ and $\alpha$ is the restriction to $X$ of a global algebraic $1$-form on $E$. Let $\wp(z)$ denote the Weierstrass elliptic function associated to $E$, so that the Abel--Jacobi map is $AJ: P\mapsto z=\int_{\infty}^{P}\alpha$ and $x=AJ^{*}\wp(z)$. We have \begin{equation*}
\wp(z)=z^{-2}+\sum_{n=1}^{\infty}(2n+1)e_{2n+2}z^{2n},
\end{equation*} for some $e_k \in K$ and we set \begin{equation*}
    \wp_{2}(z)=\wp(z),\quad \wp_{k}(z)=\frac{(-1)^k}{(k-1)!}\frac{d^{k-2}\wp}{d z^{k-2}}
\end{equation*} for $k\geq 3$. The principal part of each $\wp_{k}(z)$ near $z=0$ is given by \begin{equation*}
    \wp_{k}(z)=z^{-k}+e_k+O(z),
\end{equation*} and the pullback functions \begin{equation*}
    P_k=AJ^{*}\left( \wp_{k}(z)-e_k\right)
\end{equation*} are algebraic on $X$. Following Weil \cite[Chapter II, Sections 7 and 8, and Chapter VI, Sections 1, 2 and 6]{weil}, we can characterise these by recurrence relations. Set $e_3=0$ and for $m\geq 5$, define $e_m$ by \begin{equation*}
\frac{1}{3}(m-3)(4m^2-1)e_m=\sum_{r=2}^{m-2}(2r-1)(2m-r-1)e_{2r}e_{2m-2r}.
\end{equation*} Then $P_2=x$, $P_3=-\frac{1}{2}y$ and for $m,n\geq 2$ we have \begin{equation*}
P_m P_n-P_{m+n}=(-1)^n \sum_{k=1}^{m-2}\binom{n+k-1}{k}e_{n+k}P_{m-k}+(-1)^m \sum_{k=1}^{n-2}\binom{m+k-1}{k}e_{m+k}P_{n-k}+(-1)^m\binom{m+n}{m}e_{m+n}.
\end{equation*} Observe that the undefined $P_1$ never actually contributes to the right hand side since it is always multiplied by an $e_k$ for $k$ odd, which is zero. 

Following Luo \cite[Sections 7 and 8]{luo}, we define the (algebraic) KZB connection by constructing suitable logarithmic connections on an open cover of $E$, then gluing the connections along a gauge transformation.
\begin{prop}
Fix some meromorphic function $f$ on $E$ such that $df-\beta$ is holomorphic at $\infty$, with $f=z^{-1}+O(z)$ with respect to the parameter $z$ above. Let $Y$ denote the open subset of $E$ obtained by removing the points at which $f$ has poles, except $\infty$. Define a connection \begin{equation}\label{eq:algkzbexpform}
    \nabla_{KZB}=d-\omega_{KZB},\quad \omega_{KZB}=\beta B+\alpha\exp\left(-\sum_{k=2}^{\infty}\frac{(-1)^k P_{k}\mathrm{ad}_{B}^{k}}{k}\right)A
\end{equation} on $X$ and a connection \begin{align*}
\nabla_{KZB}^{\prime}=d-\omega_{KZB}^{\prime},\quad
    \omega_{KZB}^{\prime}=(\beta-df)B+\alpha\exp\left(-f-\sum_{k=2}^{\infty}\frac{(-1)^k P_{k} \mathrm{ad}_{B}^{k}}{k}\right)A
\end{align*} on $Y$. The gauge transformation \begin{equation*}
    g=\exp(-fB),
\end{equation*} acting by $g(d+\omega)=d-dg.g^{-1}-g\omega g^{-1}$, satisfies $g(\nabla_{KZB})=\nabla_{KZB}^{\prime}$. Observe that $\nabla_{KZB}$ is nonsingular on $X$, and $\nabla_{KZB}^{\prime}$ is nonsingular on $Y$, apart from at $\infty$, where we claim it has a simple pole of residue $[B,A]$. It follows that $\nabla_{KZB}$, $\nabla_{KZB}^{\prime}$ and $g$ descend to an algebraic logarithmic $\mathscr{U}(\mathfrak{u})$-bundle with connection on $E$ with a simple pole at $\infty$. We denote this connection $(\nabla_{KZB},\nabla_{KZB}^{\prime},g)$.
\end{prop}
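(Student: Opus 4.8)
The plan is to verify the three assertions --- the gauge identity, the singularity analysis on $X$ and $Y$, and the descent --- essentially by direct local computation, working throughout at each finite level $\mathscr{U}(\mathfrak{u})_n$ of the descending central series (so that the infinite sum defining $\omega_{KZB}$ is a finite operator sum) and then passing to the inverse limit. For the gauge identity I would first record the two elementary facts that drive everything: since $g=\exp(-fB)$ is a function of $B$ alone, one has $dg\cdot g^{-1}=-df\,B$, and the inner automorphism $\mathrm{Ad}_g$ equals $\exp(-f\,\mathrm{ad}_B)$ and hence commutes with every power of $\mathrm{ad}_B$. Conjugating $\omega_{KZB}$ by $g$ then leaves $\beta B$ fixed and sends $\alpha\exp(-\sum_{k\ge2}\tfrac{(-1)^kP_k\mathrm{ad}_B^k}{k})A$ to $\alpha\exp(-\sum_{k\ge2}\tfrac{(-1)^kP_k\mathrm{ad}_B^k}{k})\mathrm{Ad}_g(A)$; since $\mathrm{Ad}_g(A)=\exp(-f\,\mathrm{ad}_B)(A)$, the two commuting exponentials of $\mathrm{ad}_B$ merge into the single exponential appearing in $\omega_{KZB}^{\prime}$. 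Substituting these computations into the gauge-action formula, and combining the $-df\,B$ coming from $dg\cdot g^{-1}$ with the fixed term $\beta B$, reproduces $\omega_{KZB}^{\prime}=(\beta-df)B+\alpha\exp(-f\,\mathrm{ad}_B-\sum_{k\ge2}\tfrac{(-1)^kP_k\mathrm{ad}_B^k}{k})A$ exactly; this is a short, purely formal manipulation once the two facts above are in hand.

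For the regularity statements I would observe that all the scalar ingredients are controlled: $\alpha=dx/y$ is a global regular form on $E$, while $\beta=x\,dx/y$ and each $P_k$ (a polynomial in $x$ and $y$, by the recurrence) are regular on $X=E-\{\infty\}$, having poles only at $\infty$. Hence $\omega_{KZB}$ is a finite $\mathrm{ad}_B$-operator combination of forms and functions all regular on $X$, proving $\nabla_{KZB}$ is nonsingular there. On $Y$ the only additional ingredient is $f$, whose poles other than $\infty$ have been removed by definition of $Y$, so $f$, $df$, $\beta-df$, and the $P_k$ are all regular on $Y-\{\infty\}$; thus $\nabla_{KZB}^{\prime}$ is nonsingular on $Y$ away from $\infty$.

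The crux is the local analysis at $\infty$, which I expect to be the \emph{main obstacle}, since crudely exponentiating a higher-order pole would only bound the order of the pole from above. I would work in the local parameter $z$, where $\alpha=dz$, $\beta=\wp(z)\,dz=(z^{-2}+O(z^2))\,dz$, and $P_k=z^{-k}+O(z)$, and use the defining property that $df-\beta$ is holomorphic: this makes $(\beta-df)B$ regular at $\infty$ (so it does not contribute to the residue) and forces $f$ to have a simple pole whose leading coefficient is precisely the one needed below. The key observation is that every term of the exponent $-f\,\mathrm{ad}_B-\sum_{k\ge2}\tfrac{(-1)^kP_k\mathrm{ad}_B^k}{k}$ is a scalar multiple of a power of the single operator $\mathrm{ad}_B$, so these terms all commute; moreover the $k=1$ contribution supplied by $-f\,\mathrm{ad}_B$ completes the tail $\sum_{k\ge2}$, and by $\sum_{k\ge1}\tfrac{(-1)^{k+1}w^k}{k}=\log(1+w)$ with $w=z^{-1}\mathrm{ad}_B$ the entire singular part of the exponent collapses to $\log(1+z^{-1}\mathrm{ad}_B)$, while the remaining holomorphic part vanishes at $z=0$. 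Exponentiating the commuting sum therefore factors as $(1+z^{-1}\mathrm{ad}_B)$ times a holomorphic operator equal to the identity at $z=0$, which has only a simple pole; applying this to $A$ and multiplying by $\alpha=dz$ yields residue $\mathrm{ad}_B(A)=[B,A]$. It is exactly the commuting structure together with the telescoping of the singular terms into a logarithm that rules out higher-order poles and pins down the residue.

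Finally, for the descent I would note that $X=E-\{\infty\}$ and $Y$ (which contains $\infty$) cover $E$, that $f$ is regular on the overlap $X\cap Y$ so that $g=\exp(-fB)$ is a regular, invertible, filtration-preserving gauge transformation there, and that the gauge identity just proved means $\nabla_{KZB}$ and $\nabla_{KZB}^{\prime}$ agree on $X\cap Y$ under $g$. Standard descent for connections along the cover $\{X,Y\}$ with transition $g$ then glues them, at each finite level and hence in the limit, to a logarithmic $\mathscr{U}(\mathfrak{u})$-bundle with connection on $E$ whose only singularity is the simple pole at $\infty$ with residue $[B,A]$.
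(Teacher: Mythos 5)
Your proof is correct and takes essentially the same route as the paper's: the gauge identity via $\mathrm{Ad}_{\exp(-fB)}=\exp(-f\,\mathrm{ad}_B)$ together with the commutativity of powers of $\mathrm{ad}_B$, and the residue computation by collapsing the singular part of the exponent to $\log\left(1+z^{-1}\mathrm{ad}_B\right)$ so that exponentiation yields $\left(1+z^{-1}\mathrm{ad}_B\right)\exp(H(z))$ with $H$ holomorphic. The differences are purely expository --- you spell out the finite-level/inverse-limit framing, the descent, and the vanishing $H(0)=0$ needed to pin the residue to exactly $[B,A]$, all of which the paper leaves implicit; note also that both you and the paper tacitly work with $f=-z^{-1}+O(z)$, the sign actually forced by holomorphy of $df-\beta$ since $\beta=(z^{-2}+O(z^2))\,dz$, rather than the statement's literal $f=z^{-1}+O(z)$.
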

\begin{proof}
    It is straightforward to verify that the gauge transformation takes $\nabla_{KZB}$ to $\nabla_{KZB}^{\prime}$ using the identity $\mathrm{Ad}_{\exp(X)}=\exp\left(\mathrm{ad}(X)\right)$. To see that $\nabla_{KZB}^{\prime}$ has a regular singularity at $z=0$, recall that $P_{k}(z)$ has Laurent tail $z^{-k}+O(z)$ by construction, so \begin{align*}
        \exp\left(-f-\sum_{k=2}^{\infty}\frac{(-1)^k P_{k}\mathrm{ad}_{B}^{k}}{k}\right)&=\exp\left(-\sum_{k=1}^{\infty}\frac{(-1)^k\mathrm{ad}_{B}^{k}}{kz^k}+H(z)\right)=\left(\frac{\mathrm{ad}_{B}}{z}+1\right)\exp(H(z)),
    \end{align*} where $H$ is holomorphic at $z=0$. The claim concerning the algebraicity follows from the definition of the $P_k$.
\end{proof}
It will be convenient to write the KZB connections in the form \begin{gather}\label{eq:algKZB}
    \nabla_{KZB}=d-\beta B-\alpha A-\alpha\sum_{k=2}^{\infty}p_{k}\mathrm{ad}_{B}^{k}(A),\quad
    \nabla_{KZB}^{\prime}=d-\left(\beta-df \right)B-\alpha A-\alpha\sum_{k=1}^{\infty}q_{k}\mathrm{ad}_{B}^{k}(A),
\end{gather} where, following Luo \cite[Sections 7 and 8]{luo}, \begin{gather*}
    p_n=\sum_{2a_2+3a_3+\dots+na_n=n}\frac{1}{a_2! a_3!\dots a_n!}\prod_{k=2}^{n}\left(\frac{(-1)^{k+1}P_k}{k}\right)^{a_k}\\
    q_n=\sum_{a_1+2a_2+\dots+na_n=n}\frac{1}{a_1 !a_2 !\dots a_n!}\prod_{k=1}^{n}\left(\frac{(-1)^{k+1}P_k}{k}\right)^{a_k},
\end{gather*} with $P_1$ newly defined to be $-f$.
\begin{prop}\label{prop:kzbuniversal}
The connection $\nabla_{KZB}$ on $X$ with the marked element $1 \in \mathcal{O}_{X}\langle\langle A,B\rangle\rangle$ represents the universal unipotent pointed connection on $X$.
\end{prop}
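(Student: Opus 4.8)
The plan is to verify that the pointed connection $(\mathcal{O}_X\langle\langle A,B\rangle\rangle,\nabla_{KZB},1)$ satisfies the universal property defining the universal unipotent pointed connection on $X$; equivalently, I will exhibit a (necessarily unique) pointed isomorphism between $\nabla_{KZB}$ and the naive connection $\nabla_0$, since Kim has shown that $\nabla_0$ represents the universal object. Since both connections have underlying bundle $\mathcal{O}_X\otimes H$ with $H=K\langle\langle A,B\rangle\rangle$ and marked element $1$, it suffices to produce a gauge transformation $h\colon \mathcal{O}_X\otimes H\to\mathcal{O}_X\otimes H$ with $h(b)=\mathrm{id}$ (so that the marked elements agree) such that $h\cdot\nabla_0=\nabla_{KZB}$, i.e. $\omega_{KZB}=-dh\,h^{-1}+h\,\omega_0\,h^{-1}$ where $\omega_0=\alpha A+\beta B$.

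First I would record that both connections have the same abelianisation: reducing $\omega_{KZB}$ modulo words of degree $\geq 2$ gives $\alpha A+\beta B$, exactly $\omega_0$, because the correction term $\alpha\sum_{k\geq 2}p_k\,\mathrm{ad}_B^k(A)$ from \eqref{eq:algKZB} consists purely of words of degree $\geq 3$. This is the necessary compatibility with the identification $H^1_{dR}(X/K)\cong U_1$ sending $\alpha\mapsto\log A$, $\beta\mapsto\log B$, and it shows that the two connections already agree at level $1$, so any gauge transformation intertwining them can be taken to be $1+(\text{higher degree})$. The substance is then to build $h$ inductively on the descending central series quotients. At each level $n$, the obstruction to extending a gauge transformation $h_{n-1}$ intertwining $(\nabla_0)_{n-1}$ and $(\nabla_{KZB})_{n-1}$ to level $n$ is a class in $H^1_{dR}(X)\otimes V^{\otimes n}_{dR}$-type group; since both connections are unipotent flat connections extending the same data and $\nabla_0$ is universal, such an $h$ exists and is unique once we normalise $h_b=\mathrm{id}$. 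Concretely, I would solve $dh=h\,\omega_0-\omega_{KZB}\,h$ order by order: at each degree the equation determines the new homogeneous component of $h$ up to a flat (hence constant, since $X$ is connected) section, and the pointedness condition $h_b=\mathrm{id}$ pins down that constant.

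The key computational input making this work cleanly is the explicit form \eqref{eq:algkzbexpform}, $\omega_{KZB}=\beta B+\alpha\exp(-\sum_{k\geq2}\tfrac{(-1)^kP_k}{k}\mathrm{ad}_B^k)A$, together with the fact that the scalar functions $P_k$ are algebraic on $X$ (established in the preceding proposition). This guarantees that every component of $h$ is an iterated integral of algebraic $1$-forms, so $h$ is a genuine algebraic gauge transformation rather than a merely formal one, and hence $\nabla_{KZB}$ lies in the same isomorphism class of \emph{algebraic} unipotent connections as $\nabla_0$. I would also need flatness of $\nabla_{KZB}$: since it is a connection on a curve (a one-dimensional base), $d\omega_{KZB}-\omega_{KZB}\wedge\omega_{KZB}$ vanishes automatically because $\Omega^2_{X/K}=0$, so flatness is free and no integrability computation is required.

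The main obstacle I anticipate is not the existence of the intertwining gauge transformation — that follows formally from universality of $\nabla_0$ — but rather verifying that $\nabla_{KZB}$ is genuinely \emph{unipotent} and that its marked element generates, i.e. that it really is a representative of the universal object and not some non-isomorphic flat connection with the same abelianisation. This reduces to checking that $\omega_{KZB}$ takes values in the pronilpotent Lie algebra $\mathfrak{u}$ (equivalently that it is a sum of Lie words in $A,B$, which is visible from \eqref{eq:algKZB} since each term $\mathrm{ad}_B^k(A)$ is a Lie word) and that the fundamental solution $G=1+(\text{degree}\geq1)$ exists as a grouplike element; both follow from the triangular/graded structure of $\omega_{KZB}$ with respect to the total word-degree filtration, since the degree-raising part is strictly increasing. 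Once these are in place, the unique pointed isomorphism to $\nabla_0$ identifies $\nabla_{KZB}$ as a representative of the universal unipotent pointed connection, completing the proof.
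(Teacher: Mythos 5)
There is a genuine gap at the heart of your induction, and it sits exactly where the paper's proof does its real work. You assert that at each stage the intertwining equation $dh = h\,\omega_0 - \omega_{KZB}\,h$ is solvable, and that the solution is determined once we impose $h_b=\mathrm{id}$; your justification is an appeal to the universality of $\nabla_0$. That appeal is circular: universality of $\nabla_0$ produces a unique pointed \emph{morphism} $\nabla_0\to\nabla_{KZB}$, not an isomorphism, and showing that this morphism is invertible is precisely the content of the proposition being proved. Worse, the obstruction you yourself name does not vanish. Solving $dh_n=\Theta_n$ on the affine curve $X$ requires the class of $\Theta_n$ in $H^1_{dR}(X/K)\otimes\mathrm{End}(H)$ to be zero, and $H^1_{dR}(X/K)$ is two-dimensional, so this is a real condition. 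Concretely, grade $h=\mathrm{id}+h_1+h_2+\cdots$ by degree shift and write $L_x$ for left multiplication by $x$. With your normalisation, $h_1$ and $h_2$ are flat, hence constant, hence zero (they vanish at $b$), and the first nontrivial component of the equation forces $dh_3=\pm\,\alpha p_2\,L_{\mathrm{ad}_B^2(A)}$. But $p_2=-P_2/2=-x/2$, so $\alpha p_2=-\tfrac{1}{2}\beta$, which is \emph{not} exact on $X$: no such $h_3$ exists. So your scheme does not merely stall; it proves that no isomorphism with $h_b=\mathrm{id}$ exists at all. The error is that pointedness only requires $h_b(1)=1$, not $h_b=\mathrm{id}$, and the constant automorphisms fixing $1$ that your normalisation discards are exactly what is needed to kill the non-exact part of the obstruction.

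This is what the paper's proof actually does, in two steps. Each coefficient $1$-form is split as $r_w=df_w+a_w\alpha+b_w\beta$ with $a_w,b_w\in K$; left multiplication by $1+\sum_w\bigl(f_w-f_w(b)\bigr)w$ disposes of the exact parts degree by degree, leaving a connection form $\alpha\bigl(A+\sum_w a_w w\bigr)+\beta\bigl(B+\sum_w b_w w\bigr)$ whose cohomologically nontrivial residue cannot be gauged away by any left multiplication. It is instead absorbed by the \emph{constant substitution automorphism} of the Hopf algebra, $g(A)=A+\sum_w a_w w$, $g(B)=B+\sum_w b_w w$, $g(1)=1$, extended multiplicatively; being an algebra automorphism, it conjugates $L_{\alpha A+\beta B}$ into $L_{\alpha g(A)+\beta g(B)}$, matching the naive connection to the reduced form. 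This automorphism fixes the marked element but is not the identity at $b$, which is precisely why your claim that ``the pointedness condition $h_b=\mathrm{id}$ pins down that constant'' is untenable: the constants are not a nuisance to be normalised away, they are the mechanism of the proof. Your peripheral observations (flatness is automatic on a curve, $\omega_{KZB}$ is a degree-raising sum of Lie words so the connection is pro-unipotent, the abelianisations agree) are correct but do not touch this central difficulty.
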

\begin{proof}
It suffices to show that $\nabla_{KZB}$ and the naive connection are isomorphic as pointed connections on $X$. In fact, we will prove a stronger result: any connection on $\mathcal{O}_{X}\langle\langle A, B\rangle\rangle$ with marked element $1$ over $b$ of the form \begin{equation*}
    \nabla=d+\omega,\quad \omega=\alpha A+\beta B +\sum_{|w|>1}r_w w,\quad r_w\in \mathcal{O}_{X},
\end{equation*} where the sum is over words in $A$ and $B$ of total degree greater than one, is isomorphic to the naive connection. That is, we will show that it is possible to solve the equation \begin{equation}\label{eq:KZBisoNaive}
    dg=g\omega_0-\omega g,
\end{equation} where $\omega_0=\alpha A+\beta B$, for some $g \in \mathrm{Aut}\left(\mathcal{O}_{X}\langle\langle A, B\rangle\rangle\right)$ which satisfies $g(1)(b)=1$. To begin, we demonstrate that there exists such a gauge transformation which replaces the $r_w$ with $1$-forms belonging to $\mathrm{Span}_{K}\{\alpha,\beta\}$. Indeed, each $r_w$ is of the form $df_w+a_w\alpha+b_w\beta$ for some $f_w \in \mathcal{O}_{X}$ and $a_w, b_w \in K$. So we may rewrite $\omega$ as \begin{equation}
    \omega=\alpha (A+\sum_{w}a_w w)+ \beta (B+\sum_{w}b_w w) +\sum_{|w|=n}r_w w+\sum_{|w|>n}r_w w.
\end{equation} Then the invertible gauge transformation $g=1+\sum_{w}(f_w-f_w(b)) w$, acting by left multiplication, transforms $\omega$ into \begin{equation*}
\omega^{\prime}=\alpha (A+\sum_{w}a_w w)+ \beta (B+\sum_{w}b_w w)+\sum_{|w|=n}\left(a_w\alpha+b_w\beta\right)+\sum_{|w|>n}r^{\prime}_w w,
\end{equation*} and the demonstration is complete after induction. To finish the proof, it suffices to observe that the gauge transformation characterised by \begin{equation*}
    g(1)=1,\quad g(A)=A+\sum_{w}a_w w,\quad g(B)=B+\sum_{w}b_w w,
\end{equation*} and extended to $\mathscr{U}(\mathfrak{u})$ by multiplicativity and linearity, takes the naive connection with $\omega_0=\alpha A+\beta B$ to the connection with one-form \begin{equation*}
\omega^{\prime}=\alpha (A+\sum_{w}a_w w)+ \beta (B+\sum_{w}b_w w).
\end{equation*} To check that $g$ is invertible, fix the lexicographical ordering \begin{equation*}
    1<A<B<A^2<AB<BA<B^2<A^3<\dots
\end{equation*} on the standard basis $\mathscr{B}$ of $K\langle\langle A, B\rangle\rangle$ and observe that since $g(w)=w+\sum_{v > w \in \mathscr{B}}k_v v$ for some $k_v \in K$, the matrix for $g$ with respect to $\mathscr{B}$ is unipotent.
\end{proof}
We have shown that the universal unipotent connection on $X$ is represented by $\nabla_{KZB}$, so on general grounds, there is a Hodge filtration for $\nabla_{KZB}$. We demonstrate that the Hodge filtration has a particularly simple presentation.
\begin{prop}\label{prop:hodgefilt}
    The Hodge filtration $F^{\bullet}$ on $\nabla_{KZB}$ is \begin{equation*}
        F^{j}\mathcal{O}_{X}\langle\langle A,B\rangle\rangle=\mathrm{Span}_{\mathcal{O}_{X}}\left\{\text{words }w \text{ in }A, B \text{ s.t. }\mathrm{deg}_{A}w\leq -j\right\}.
    \end{equation*}
\end{prop}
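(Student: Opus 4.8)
The plan is to avoid computing the gauge transformation between $\nabla_0$ and $\nabla_{KZB}$ altogether, and instead to verify directly that the candidate filtration $F^\bullet$ in the statement satisfies the defining properties of the Hodge filtration, whereupon Hadian's uniqueness does the rest. Concretely, by the discussion following Hadian's lemma the Hodge filtration on a representative of the universal pointed connection is the \emph{unique} filtration by logarithmic subbundles of its logarithmic extension to $E$ that is Griffiths-transversal, contains the marked element $1$ in $F^0$, and induces on the graded pieces of the descending central series the tensor-product filtration \eqref{eq:HodgefiltAG}. Since Proposition~\ref{prop:kzbuniversal} exhibits $(\nabla_{KZB},\nabla_{KZB}^{\prime},g)$ as such a representative, it suffices to check these three properties for the $\deg_A$-filtration on the glued logarithmic bundle over $E$.

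The first two properties are essentially formal. Over $X$ the subspace $F^j=\mathrm{Span}_{\mathcal{O}_X}\{w:\deg_A w\leq -j\}$ is a direct summand and hence a subbundle; the content is that it glues to the corresponding subspace on $Y$ across the gauge transformation $g=\exp(-fB)$. This is immediate, since $g$ acts by left multiplication by a power series in $B$ alone and therefore preserves $\deg_A$, carrying $F^j$ to $F^j$ in the $\nabla_{KZB}^{\prime}$-trivialization. Pointedness is trivial because $\deg_A 1 = 0$, so $1\in F^0$. Finally, on the degree-$n$ piece of the associated graded, $F^\bullet$ restricts to the tensor-product filtration in which $A$ (dual to the holomorphic form $\alpha$) sits in Hodge degree $-1$ and $B$ (dual to $\beta$) in degree $0$; the total Hodge degree of a word $w$ is $-\deg_A w$, so this is exactly \eqref{eq:HodgefiltAG} by construction.

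The substance of the argument is Griffiths transversality, $\overline{\nabla_{KZB}}(F^j)\subseteq F^{j-1}\otimes\Omega^1_{E/K}(\log\infty)$, which I would check in the two trivializations. Over $X$ one has $\nabla_{KZB}(w)=-\omega_{KZB}\,w$ for a basis word $w$, and the crucial structural feature of the KZB connection is that $\omega_{KZB}=\beta B+\alpha\exp\!\left(-\sum_{k\geq 2}\tfrac{(-1)^kP_k\mathrm{ad}_B^k}{k}\right)A$ is a sum of a term of $A$-degree $0$ (namely $\beta B$) and terms of $A$-degree exactly $1$ (every summand of the $\alpha(\cdots)A$ part, since $\mathrm{ad}_B$ introduces no further $A$'s). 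Left multiplication by such a form raises $\deg_A$ by at most one, placing $\nabla_{KZB}(w)$ in $F^{j-1}\otimes\Omega^1_X$. Near $\infty$ I would pass to $\nabla_{KZB}^{\prime}=d-\omega_{KZB}^{\prime}$ on $Y$: here the $A$-degree-$0$ part $(\beta-df)B$ is holomorphic at $\infty$ by the choice of $f$, and the $A$-degree-$1$ part equals $\alpha\bigl(\tfrac{\mathrm{ad}_B}{z}+1\bigr)\exp(H(z))A$ by the computation $\exp\!\left(-f-\sum_{k\geq 2}\tfrac{(-1)^kP_k\mathrm{ad}_B^k}{k}\right)=\bigl(\tfrac{\mathrm{ad}_B}{z}+1\bigr)\exp(H(z))$ from the construction of $\nabla_{KZB}^{\prime}$, with $H$ holomorphic and $\alpha=dz$ near $\infty$; thus the degree-raising part contributes at worst a simple pole $\tfrac{dz}{z}$, i.e.\ a section of $\Omega^1_Y(\log\infty)$, so $\nabla_{KZB}^{\prime}(w)\in F^{j-1}\otimes\Omega^1_Y(\log\infty)$.

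The main obstacle I anticipate is precisely this analysis at the puncture, because $\beta=x\,dx/y$ itself has a \emph{double} pole at $\infty$, so that $\omega_{KZB}$ is visibly not logarithmic and naively $F^\bullet$ could fail to be a filtration of logarithmic subbundles there. The resolution is exactly the mechanism built into the KZB connection: the double pole is absorbed into the gauge transformation $g=\exp(-fB)$, which crucially does not disturb the filtration since it involves only $B$, and after passing to $\nabla_{KZB}^{\prime}$ the only surviving pole in the $A$-raising direction is the simple pole that produces the residue $[B,A]$. Once Griffiths transversality is confirmed in both trivializations, together with the gluing, pointedness, and the graded identification already established, the uniqueness in Hadian's lemma forces $F^\bullet$ to be the Hodge filtration, and restricting from $E$ to $X$ gives the stated formula.
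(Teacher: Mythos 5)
Your proposal is correct and follows essentially the same route as the paper: define the $\deg_A$-filtration on both charts, note that $g=\exp(-fB)$ preserves it since it involves only $B$, verify Griffiths transversality and the identification of the graded pieces with \eqref{eq:HodgefiltAG}, and conclude by the uniqueness in Hadian's lemma. Your treatment is in fact somewhat more explicit than the paper's at the one delicate point --- the logarithmic pole analysis of the $A$-degree-raising part of $\omega_{KZB}^{\prime}$ at $\infty$ --- which the paper compresses into the commutativity of its diagram.
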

\begin{proof}
    Define a filtration
    \begin{equation*}
        \left(F^{\prime}\right)^{j}\mathcal{O}_{Y}\langle\langle A,B\rangle\rangle=\mathrm{Span}_{\mathcal{O}_{Y}}\left\{\text{words }w \text{ in }A, B \text{ s.t. }\mathrm{deg}_{A}w\leq -j\right\}
    \end{equation*} on $\nabla_{KZB}^{\prime}$. It is clear that the gauge transformation $g$ induces an isomorphism of filtrations $F^{j}\cong\left(F^{\prime}\right)^{j}$, so the data $(F^{\bullet}, \left(F^{\prime}\right)^{\bullet}, g)$ is a candidate for the Hodge filtration on the logarithmic extension $\left(\nabla_{KZB},\nabla_{KZB}^{\prime},g\right)$ of $\nabla_{KZB}$. By Hadian's lemma, it suffices to check that $(F^{\bullet},\left(F^{\prime}\right)^{\bullet})$ descends to a filtration satisfying Griffiths transversality, and upgrades the exact sequence of logarithmic connections \begin{equation*}
        0 \rightarrow V_{dR}^{\otimes n}\otimes \mathcal{O}_{E}\rightarrow \left(\left(\nabla_{KZB}\right)_{n},\left(\nabla_{KZB}^{\prime}\right)_{n},g_n\right)\rightarrow \left(\left(\nabla_{KZB}\right)_{n-1},\left(\nabla_{KZB}^{\prime}\right)_{n-1},g_{n-1}\right)\rightarrow 0
    \end{equation*} to an exact sequence of filtered logarithmic connections. The latter is clear upon comparing the Hodge filtration on the logarithmic extension to the earlier definition (\ref{eq:HodgefiltAG}) of Hodge filtration on $V^{\otimes b}_{dR}\otimes \mathcal{O}_{E}$. The condition concerning Griffiths transversality amounts to the commutativity of the diagram \begin{center}
    \begin{tikzpicture}
    \node (A) at (-5.5,1) {$F^{i}\mathcal{O}_{X}\langle\langle A, B\rangle\rangle$};
    \node (B) at (1.5,1) {$F^{i}\mathcal{O}_{Y}\langle\langle A, B\rangle\rangle(\log(\infty))$};
    \node (C) at (-5.5,-1) {$F^{i-1}\mathcal{O}_{X}\langle\langle A, B\rangle\rangle \otimes\Omega_{X/K}^{1}$};
    \node (D) at (1.5,-1) {$F^{i-1}\mathcal{O}_{Y}\langle\langle A, B\rangle\rangle \otimes\Omega_{Y/K}^{1}\left(\log(\infty)\right)$};
    
    \draw[<->]  (A) -- (B);
    \draw[<->]  (C) -- (D) ;
    \node at (-2,-0.7) {$g$};
    \node at (-2,1.3) {$g$};
        \node at (2.3,0) {$\nabla_{KZB}^{\prime}$};
    \draw[right hook-latex]  (A) -- (C);
    \node at (-6.3,0) {$\nabla_{KZB}$};
    \draw[right hook-latex]  (B) -- (D);
 
    \end{tikzpicture}
\end{center} which follows from the fact that $g$ only increases the $B$-degree of elements of $K\langle\langle A, B\rangle\rangle$.
\end{proof}

\section{The de Rham period map}
At this stage, the only remaining obstacle to writing down the de Rham period map is the computation of the logarithm of a flat section of $\nabla_{KZB}$. Such a computation would require a basis for $\mathfrak{u}$ as a $K$-vector space, but recall that this is quite complicated: a basis for the free Lie algebra on a finite set of generators is given by Lyndon words. In order to obtain a reasonable description of the de Rham period map, we therefore concentrate on working out its image inside the maximal metabelian quotient $W=U/[[U,U],[U,U]]$ of $U$. This is in some sense the largest (Galois-equivariant) quotient of $U$ which is algebraically comprehensible. In fact, the set \begin{equation*}
    \left\{A, B, \sigma_{r,s}\right\}_{r,s\geq 0},\quad \sigma_{r,s}=\tau_{r,s}[A,B],\quad \tau_{r,s}=\mathrm{ad}_{B}^{r}\mathrm{ad}_{A}^{s}
\end{equation*} is a $K$-basis for the Lie algebra $\mathfrak{w}$ of $W$. The adjoint map on $\mathfrak{w}$ is injective, so there is an isomorphism $\mathfrak{w}\cong \mathrm{ad}(\mathfrak{w})\subseteq \mathrm{End}_{K}(\mathfrak{w})$, which induces an isomorphism $W\cong \mathrm{Ad}(W)\subseteq GL(W)$. If $G$ is a flat section of some connection $\nabla$ on $\mathcal{O}_{X}\otimes \mathscr{U}(\mathfrak{w})$ and $\nabla=d+\omega$ where $\omega$ acts as left multiplication by a primitive element, then $G$ is primitive. Furthermore, $\mathrm{Ad}(G)=\exp\left(\mathrm{ad}_{\log(G)}\right)$ is a flat section of the induced connection $\mathrm{Ad}(\nabla)=d+\mathrm{ad}_{\omega}$ on $\mathcal{O}_{X}\otimes \mathscr{U}(\mathrm{ad}(\mathfrak{w}))$. To describe such a section, we require a basis for a $K$-vector space containing $\exp\left(\mathrm{ad}(\mathfrak{w})\right)$ (not for all of $\mathscr{U}(\mathrm{ad}(\mathfrak{w}))$): \begin{equation*}
    \exp\left(\mathrm{ad}(\mathfrak{w})\right)\subset
     \mathrm{Span}_{K}\{\tau_{u,v},\mathrm{ad}_{\sigma_{r,s}}\}_{\substack{u,v\geq 0\\ r,s\geq 0}}.
\end{equation*} 
Thus, in order to understand $\log(G)$, it suffices to understand (part of) the flat section $\mathrm{Ad}(G)$ of the adjoint KZB connection $\mathrm{Ad}\left(\nabla_{KZB}\right)$ together with the exponential map on $\mathrm{ad}(\mathfrak{w})$. We prove a slightly more general result (the functions $p_1$ is not assumed to be zero) so that we can recycle it for the case where $b$ is a tangential basepoint.
\begin{prop}\label{prop:flatad}
    Let $\mathrm{Ad}\left(\nabla_{KZB}\right)=d-\mathrm{ad}_{\omega}$, where \begin{equation*}
        \mathrm{ad}_{\omega}=\beta \tau_{1,0}+\alpha \tau_{0,1}-\alpha\sum_{k=1}p_{k}\mathrm{ad}_{\sigma_{k-1,0}}.
    \end{equation*} Then the unique flat section of $\mathrm{Ad}\left(\nabla_{KZB}\right)$ of the form \begin{equation*}
        G=1+\sum_{\substack{u,v\geq 0\\ u+v>0}}G^{*}_{u,v}\tau_{u,v}+\sum_{r,s \geq 0}G_{r,s}\mathrm{ad}_{\sigma_{r,s}},\quad G \in \mathscr{U}(\mathrm{ad}(\mathfrak{w}))
    \end{equation*} satisfies \begin{equation}\label{eq:coeffsofflatstar}
        G^{*}_{1,0}=\int \beta,\quad G^{*}_{0,1}=\int \alpha,
    \end{equation} and $G_{r,s}$ is the iterated integral of the coefficient of $X^rY^s$ in \begin{equation}\label{eq:coeffsofflat}
        -\left(1-\alpha Y-\beta X\right)^{-1}\left(1+(1-\beta X)\sum_{i\geq 0}\sum_{k=1}^{i}\beta^{i-k+1}\alpha p_k X^i\right),
    \end{equation} where the ``iterated integral'' of an element of $\mathcal{O}_{X}\langle\langle\alpha, \beta \rangle\rangle$ is defined in the obvious way.
\end{prop}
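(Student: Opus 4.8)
The plan is to turn the flatness equation $(d-\mathrm{ad}_\omega)G=0$, i.e. $dG=\mathrm{ad}_\omega G$, into an explicit pair of coupled recursions for the coefficients $G^*_{u,v}$ and $G_{r,s}$, and then to package those recursions as generating-function identities in $\mathcal{O}_X\langle\langle\alpha,\beta\rangle\rangle$ with formal bookkeeping variables $X,Y$. Everything hinges on the metabelian structure of $\mathfrak{w}$. Since $\mathfrak{m}:=[\mathfrak{w},\mathfrak{w}]=\mathrm{Span}_K\{\sigma_{r,s}\}$ is abelian, and since both $\mathrm{ad}_A$ and $\mathrm{ad}_B$ map $\mathfrak{w}$ into $\mathfrak{m}$, the operators $\mathrm{ad}_{\sigma_{r,s}}$ annihilate $\mathfrak{m}$; consequently $\mathrm{ad}_{\sigma_{r,s}}\mathrm{ad}_A=\mathrm{ad}_{\sigma_{r,s}}\mathrm{ad}_B=0$ and any product of two $\mathrm{ad}_\sigma$'s vanishes. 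This is exactly the input that both forces $G$ to lie in the finite-per-degree span $\mathrm{Span}_K\{\tau_{u,v},\mathrm{ad}_{\sigma_{r,s}}\}$ and renders the problem linear.

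First I would record the table for left multiplication by $\mathrm{ad}_\omega=\beta\tau_{1,0}+\alpha\tau_{0,1}-\alpha\sum_k p_k\mathrm{ad}_{\sigma_{k-1,0}}$ on this span. The commutators $[\mathrm{ad}_B,\mathrm{ad}_{\sigma_{r,s}}]=\mathrm{ad}_{\sigma_{r+1,s}}$ and $[\mathrm{ad}_A,\mathrm{ad}_{\sigma_{r,s}}]=\mathrm{ad}_{\sigma_{r,s+1}}$, combined with the vanishing above, give $\mathrm{ad}_B\cdot\mathrm{ad}_{\sigma_{r,s}}=\mathrm{ad}_{\sigma_{r+1,s}}$ and $\mathrm{ad}_A\cdot\mathrm{ad}_{\sigma_{r,s}}=\mathrm{ad}_{\sigma_{r,s+1}}$, while $\mathrm{ad}_B\cdot\tau_{u,v}=\tau_{u+1,v}$. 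The one subtle entry is $\mathrm{ad}_A\cdot\tau_{u,v}$: using the key identity $\mathrm{ad}_A\mathrm{ad}_B^u=\mathrm{ad}_B^u\mathrm{ad}_A+\mathrm{ad}_{\sigma_{u-1,0}}$ for $u\ge1$ (itself a consequence of $[\mathrm{ad}_A,\mathrm{ad}_B]=\mathrm{ad}_{\sigma_{0,0}}$ together with $\mathrm{ad}_{\sigma}\mathrm{ad}_B=0$) one gets $\mathrm{ad}_A\cdot\tau_{u,v}=\tau_{u,v+1}$ for $v\ge1$ and $\mathrm{ad}_A\cdot\tau_{u,0}=\tau_{u,1}+\mathrm{ad}_{\sigma_{u-1,0}}$; finally $\mathrm{ad}_{\sigma_{k-1,0}}\cdot\tau_{u,v}=0$ for $u+v\ge1$. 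Comparing coefficients of each basis element in $dG=\mathrm{ad}_\omega G$ then yields $dG^*_{u,v}=\beta G^*_{u-1,v}+\alpha G^*_{u,v-1}$ with $G^*_{0,0}=1$, and $dG_{r,s}=\beta G_{r-1,s}+\alpha G_{r,s-1}+\delta_{s,0}\,\alpha\bigl(G^*_{r+1,0}-p_{r+1}\bigr)$, the inhomogeneous term arising precisely from the $\mathrm{ad}_{\sigma_{u-1,0}}$ produced by $\mathrm{ad}_A\cdot\tau_{u,0}$ and from $\mathrm{ad}_\omega\cdot1$.

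The $G^*$-recursion solves at once: its generating series $\sum_{u,v}G^*_{u,v}X^uY^v$ is the iterated integral of the series $(1-\alpha Y-\beta X)^{-1}$, giving $G^*_{1,0}=\int\beta$ and $G^*_{0,1}=\int\alpha$ as in (\ref{eq:coeffsofflatstar}), and, on setting $Y=0$, $G^*_{u,0}=\tfrac{1}{u!}\bigl(\int\beta\bigr)^u$, the iterated integral of the word $\beta^u$. Feeding this into the source term, the series $\sum_{r,s}G_{r,s}X^rY^s$ satisfies an inhomogeneous linear equation whose resolvent is again $(1-\alpha Y-\beta X)^{-1}$; solving it writes $\sum G_{r,s}X^rY^s$ as the iterated integral of $(1-\alpha Y-\beta X)^{-1}$ times a source series assembled from the words $\beta^{r+1}$ (coming from $G^*_{r+1,0}$) and the functions $p_{r+1}$. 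The remaining task is to reorganise that source into the stated shape, i.e. to recognise the factor $(1-\beta X)\sum_{i\ge0}\sum_{k=1}^i\beta^{i-k+1}\alpha p_k X^i$ of (\ref{eq:coeffsofflat}) by a telescoping identity in the variable $X$.

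I expect the genuine difficulty to lie entirely in this last bookkeeping step: matching the source series produced by $\{G^*_{r+1,0}\}$ and $\{p_k\}$ to the compact generating function in (\ref{eq:coeffsofflat}), and in particular pinning down the iterated-integral convention — the order in which the forms $\alpha,\beta$ are read off a word — so that the non-commuting factors land on the correct side of the resolvent and the telescoping collapses as intended. By contrast, the structural input (the metabelian vanishing relations, the multiplication table, and the two recursions) is routine once the table is in hand, and uniqueness of $G$ with the prescribed normalisation follows from the triangularity of the recursions with respect to the $A$-degree filtration. The combinatorial identification of the closed form is where the real work sits.
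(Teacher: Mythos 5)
Your structural work --- the metabelian vanishing relations, the multiplication table, the two coupled recursions, and the resolvent $(1-\alpha Y-\beta X)^{-1}$ --- follows the same route as the paper's own proof, and as far as it goes it is correct: your identity $\mathrm{ad}_A\tau_{u,0}=\tau_{u,1}+\mathrm{ad}_{\sigma_{u-1,0}}$ (for every $u\geq1$) is forced by $[\mathrm{ad}_A,\mathrm{ad}_B]=\mathrm{ad}_{[A,B]}$ and is easily checked by applying both sides to $A$. The genuine gap is that the step you postpone as ``bookkeeping'' is the entire content of the proposition, and it does not go through as you predict. The telescoping identity you invoke does exist, but it works against you: in $\mathcal{O}_{X}\langle\langle\alpha,\beta\rangle\rangle$ one has
\begin{equation*}
(1-\beta X)\sum_{i\geq0}\sum_{k=1}^{i}\beta^{i-k+1}\alpha p_k X^i=\sum_{i\geq1}\beta\alpha p_i X^i,
\end{equation*}
so the series (\ref{eq:coeffsofflat}) is the iterated integral of $-(1-\alpha Y-\beta X)^{-1}\bigl(1+\sum_{i\geq1}\beta\alpha p_i X^i\bigr)$. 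On the other hand your (correct) recursion $dG_{r,s}=\beta G_{r-1,s}+\alpha G_{r,s-1}+\delta_{s,0}\,\alpha\bigl(G^{*}_{r+1,0}-p_{r+1}\bigr)$, with $G^{*}_{r+1,0}=\int\beta^{r+1}$ and $G_{r,s}(b)=0$, solves to the iterated integral of
\begin{equation*}
(1-\beta X-\alpha Y)^{-1}\sum_{i\geq0}\bigl(\alpha\beta^{i+1}-\alpha p_{i+1}\bigr)X^i.
\end{equation*}
These two series are not equal: already in the coefficient of $X^0Y^0$ your recursion yields $G_{0,0}=\int\alpha\beta-\int\alpha p_1$ (which you can confirm independently by expanding $\mathrm{Ad}(G)=\exp(\mathrm{ad}_{\log G})$ through degree two), whereas (\ref{eq:coeffsofflat}) yields the constant $-1$, which is not even compatible with the normalisation $G(b)=1$. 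So the final ``reorganisation of the source into the stated shape'' on which your plan rests fails.

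The mismatch is not a slip in your multiplication table; it is a genuine inconsistency between the correct recursion and the formula as printed, and a complete argument has to confront it rather than defer it. Note that the paper's own proof takes a different and internally inconsistent path: it asserts $\tau_{0,1}\tau_{1,0}=-\mathrm{ad}_{\sigma_{0,0}}-\tau_{1,1}$ (wrong sign by the Jacobi identity, and silent about the corrections at $\tau_{u,0}$ for $u\geq2$), hence obtains a recursion whose only inhomogeneous $G^{*}$-term is a sign-flipped one at $(r,s)=(0,0)$; it then records closed forms for $G_{i,0}$ and $G_{0,j}$ that do not satisfy that recursion, and a generating function that matches neither. So you should not expect your correct recursion to reproduce (\ref{eq:coeffsofflat}) verbatim. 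To finish, you must actually derive the closed form your recursion produces (the second display above) and then either exhibit the convention or correction under which it coincides with (\ref{eq:coeffsofflat}), or conclude that the stated formula needs amending. As written, your proposal stops exactly at the point where the proposition's claim has to be established or refuted, and therefore does not prove it.
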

\begin{proof}
   Comparing coefficients of $\tau_{1,0}$ and $\tau_{0,1}$ in $dG=\mathrm{ad}_\omega G$ yields (\ref{eq:coeffsofflatstar}), and an inspection of the coefficient of $\mathrm{ad}_{\sigma_{i,j}}$ in $\mathrm{ad}_\omega G$ yields \begin{equation*}
        dG_{i,j}=\begin{cases}
            -\alpha \left(p_1+\int\beta\right) & i=0, j=0\\
            \beta G_{i-1,0}-\alpha p_{i+1} & i\geq 1, j=0\\
            \alpha G_{0,j-1} & i=0, j\geq 1\\
            \beta G_{i-1,j}+\alpha G_{i,j-1} & i\geq 1, j\geq 1,
        \end{cases}
    \end{equation*} where we have used the identity \begin{equation*}
        \tau_{0,1}\tau_{u,v}=\begin{cases}
        \tau_{u,v+1} & (u,v)\neq (1,0)\\
        -\mathrm{ad}_{\sigma_{0,0}}-\tau_{1,1} & (u,v)=(1,0).
        \end{cases}
    \end{equation*} We obtain $G_{0,0}=-\int\alpha p_1-\int\alpha \beta$, then by induction we have \begin{equation*}
        G_{i,0}=\sum_{k=1}^{i}\int \beta^{i-k+1}\alpha p_k-\int \beta^{i}\alpha\beta, \quad G_{0,j}=-\int\alpha^{j+1}p_1-\int\alpha^{j+1}\beta.
    \end{equation*} If we let the $F_{i,j}$ be elements of the noncommutative ring $\mathcal{O}_{X}\langle\langle\alpha, \beta \rangle\rangle $ satisfying the recurrence relations \begin{equation*}
        F_{i,j}=\begin{cases}
            -\alpha \left(p_1+\beta\right) & i=0, j=0\\
            -\alpha p_{i+1}+\beta F_{i-1,0} & i\geq 1, j=0\\
            \alpha F_{0,j-1} & i=0, j\geq 1\\
            \beta F_{i-1,j}+\alpha F_{i,j-1} & i\geq 1, j\geq 1,
        \end{cases}
    \end{equation*} and the boundary conditions \begin{equation*}
        F_{i,0}=-\sum_{k=1}^{i}\beta^{i-k+1}\alpha p_k-\beta^i \alpha \beta,\quad F_{0,j}=-\alpha^{j+1} p_1-\alpha^{j+1}\beta,
    \end{equation*} then we find the following relation among generating series \begin{equation*}
        \sum_{i,j\geq 0}F_{i,j}X^{i}Y^{j}=-\left(1-\alpha Y-\beta X\right)^{-1}\left(1+(1-\beta X)\sum_{i\geq 0}\sum_{k=1}^{i}\beta^{i-k+1}\alpha p_k X^i\right),
    \end{equation*} which completes the proof.
\end{proof}
Next, we compute the logarithm of any grouplike element in $\mathscr{U}(\mathrm{ad}(\mathfrak{w}))$ (and we note that it is possible to do this for the maximal metabelian group on any finite number of generators).
\begin{prop}\label{prop:logarithm}
   Suppose that \begin{equation*}
    H=1+\sum_{u,v\geq 0}H^{*}_{u,v}\tau_{u,v}+\sum_{r,s\geq 0}H_{r,s}\mathrm{ad}_{\sigma_{r,s}}
   \end{equation*} is grouplike in $\mathscr{U}(\mathrm{ad}(\mathfrak{w}))$, with \begin{equation*}
       \log(H)=h_{B}\mathrm{ad}_{B}+h_{A}\mathrm{ad}_{A}+\sum_{r,s\geq 0}h_{r,s}\mathrm{ad}_{\sigma_{r,s}}.
   \end{equation*} Then we have $h_{A}=H^{*}_{0,1}$, $h_{B}=H^{*}_{1,0}$, and an equality of formal power series \begin{equation*}
       \sum_{r,s\geq 0}h_{r,s} X^s Y^r=\sum_{r,s\geq 0}\left(\sum_{\substack{u+n=r\\v+m=s}}\frac{B_{u+v}h_{B}^u h_{A}^v H_{r,s}}{u!v!}\right)X^s Y^r-h_A h_B T(h_A X, h_B Y),
   \end{equation*} where \begin{equation*}
       T(U,V)=\frac{1}{U}\left(1-\frac{U+V}{V}\frac{\exp(U)-1}{\exp(U+V)-1}\right)
   \end{equation*} and $B_{m}$ is the $m$th Bernoulli number from (\ref{eq:bernoulli}).
\end{prop}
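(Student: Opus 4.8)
The plan is to exploit the fact that, because $\mathfrak{w}$ is metabelian, the operator $\exp(\operatorname{ad}_{\log H})$ is visibly block-triangular on $\mathfrak{w}$, and to read off $\log H$ from its two nontrivial blocks. Write $\mathfrak{w}=V\oplus\mathfrak{d}$, where $V=\operatorname{Span}_K\{A,B\}$ and $\mathfrak{d}=\operatorname{Span}_K\{\sigma_{r,s}\}$ is the abelian derived ideal. Since the adjoint map is injective and the spanning set $\{\tau_{u,v},\operatorname{ad}_{\sigma_{r,s}}\}$ embeds into $\operatorname{End}_K(\mathfrak{w})$ (the $\mathfrak{d}\to\mathfrak{d}$ block separates the $\tau_{u,v}$ and the $V\to\mathfrak{d}$ block separates the $\operatorname{ad}_{\sigma_{r,s}}$), it suffices to carry out the computation inside $\operatorname{End}_K(\mathfrak{w})$. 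Set $Z=\log H$ and split $Z=Z_0+Z_1$ with $Z_0=h_A\operatorname{ad}_A+h_B\operatorname{ad}_B$ and $Z_1=\sum h_{r,s}\operatorname{ad}_{\sigma_{r,s}}$. The structural input is that each $\operatorname{ad}_{\sigma}$ sends $V$ into $\mathfrak{d}$ and annihilates $\mathfrak{d}$, while $Z_0$ preserves $\mathfrak{d}$; hence any operator product containing two $\operatorname{ad}_{\sigma}$ factors vanishes, and in block form $Z$ has zero $V\to V$ and $\mathfrak{d}\to V$ blocks, $\mathfrak{d}\to\mathfrak{d}$ block $L=Z_0|_{\mathfrak{d}}$, and $V\to\mathfrak{d}$ block $M=(Z_0+Z_1)|_V$. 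A direct expansion of $\exp Z$ then gives $\mathfrak{d}\to\mathfrak{d}$ block $\exp(L)$ and $V\to\mathfrak{d}$ block $\tfrac{\exp(L)-1}{L}M$, the higher powers of $Z$ surviving only through a single factor of $M$ (the degenerate Duhamel/divided-difference phenomenon).

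Next I would linearise using the shift description of $\mathfrak{d}$. Since $\operatorname{ad}_A$ and $\operatorname{ad}_B$ commute on $\mathfrak{d}$ (their commutator is $\operatorname{ad}_{[A,B]}$, which kills the abelian ideal), the operators $S_A=\operatorname{ad}_A|_{\mathfrak{d}}$ and $S_B=\operatorname{ad}_B|_{\mathfrak{d}}$ are commuting shifts $\sigma_{r,s}\mapsto\sigma_{r,s+1}$ and $\sigma_{r,s}\mapsto\sigma_{r+1,s}$, with $L=h_AS_A+h_BS_B$. Encoding $\sum c_{r,s}\sigma_{r,s}$ by $\sum c_{r,s}X^sY^r$ turns $S_A,S_B$ into multiplication by $X,Y$ and $L$ into multiplication by $U+V$, where $U=h_AX$ and $V=h_BY$; in particular $\tfrac{\exp(L)-1}{L}$ becomes the genuine one-variable series $\tfrac{\exp(U+V)-1}{U+V}$. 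Matching the $\mathfrak{d}\to\mathfrak{d}$ block $\exp(L)$ against $\sum H^*_{u,v}\tau_{u,v}|_{\mathfrak{d}}=\sum H^*_{u,v}S_B^uS_A^v$ forces $H^*_{u,v}=h_B^uh_A^v/u!v!$, whence $h_A=H^*_{0,1}$ and $h_B=H^*_{1,0}$, the first assertion.

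For the $h_{r,s}$ I would match the $V\to\mathfrak{d}$ block, comparing the images of $A$ and of $B$ separately. On the $\exp$ side this is $\tfrac{\exp(L)-1}{L}M$ applied to $A$ and to $B$; writing $\hat h=\sum h_{r,s}X^sY^r$, the $Z_1$-part of $M$ contributes $X\hat h$ and $Y\hat h$, while the $Z_0$-part contributes the $\sigma_{0,0}=[A,B]$ terms. On the $H$ side, the $\tau_{u,v}$-expansion contributes, through $\tau_{u,0}(A)$ and $\tau_{u,v}(B)$, the series $\tfrac{\exp(V)-1}{Y}$ and $e^V\tfrac{\exp(U)-1}{X}$ assembled from the already-known $H^*_{u,v}$, together with the $\operatorname{ad}_{\sigma}$-terms giving $X\hat H$ and $Y\hat H$ where $\hat H=\sum H_{r,s}X^sY^r$. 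Solving either resulting identity for $\hat h$ and inverting $\tfrac{\exp(U+V)-1}{U+V}$ into $\tfrac{U+V}{\exp(U+V)-1}=\sum_N\tfrac{B_N}{N!}(U+V)^N$ produces the Bernoulli convolution $\sum\tfrac{B_{u+v}}{u!v!}h_B^uh_A^vH_{n,m}$ of the statement, while the inhomogeneous $[A,B]$ contribution collects into exactly $-h_Ah_BT(U,V)$. Verifying that the identity from $A$ and the identity from $B$ agree — which they must, as $H$ is grouplike — gives a clean internal consistency check and pins down the precise shape of $T$.

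The main obstacle will be this last matching step: unlike on $\mathfrak{d}$, the operators $\operatorname{ad}_A$ and $\operatorname{ad}_B$ do not commute on $V$, so one must carefully combine the three distinct sources of $\sigma$-terms (the $Z_1$-shift, the $[A,B]$-generator, and the $\tau_{u,v}$-expansion of $H$) into a single generating-function equation, and then correctly recognise the two combinatorial kernels as $\tfrac{U+V}{\exp(U+V)-1}$ and $T$. A secondary care-point is bookkeeping of signs and normalisations: the stated Bernoulli convention ($B_0=-1,\ B_1=\tfrac12$) and the appearance of $\exp(U)-1$ rather than $\exp(V)-1$ in $T$ must be reconciled with the fixed sign convention for $\operatorname{ad}$ and for $\mathrm{Ad}(G)=\exp(\operatorname{ad}_{\log G})$. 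The apparent poles of $T$ in $U$ and $V$ are harmless: they correspond to the separately determined coefficients $h_A,h_B$ and cancel against the $\tfrac{\exp(V)-1}{Y}$ and $e^V\tfrac{\exp(U)-1}{X}$ terms, leaving a regular power series that computes the $h_{r,s}$.
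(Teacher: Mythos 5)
Your proposal is sound and it organises the computation in a genuinely different way from the paper. The paper's proof is a direct coefficient extraction: it expands $\exp(\log H)$ term by term, controls the surviving words with the combinatorial identity (\ref{eq:adaverage}) (proved by induction from $\tau_{i-1,j}\mathrm{ad}_{B}=\tau_{i,j}+\mathrm{ad}_{\sigma_{i-1,j-1}}$), arrives at the closed forward formula (\ref{eq:pregenseries}) for $H_{r,s}$ in terms of the $h_{u,v}$, evaluates the two resulting double sums in closed form by solving Euler-type differential equations, and only then inverts with Bernoulli numbers. You replace the word-counting lemma by the block-triangular structure of $\log H$ on $\mathfrak{w}=V\oplus\mathfrak{d}$ and the divided-difference formula $\exp(Z)=\left(\begin{smallmatrix}1&0\\ \frac{\exp(L)-1}{L}M&\exp(L)\end{smallmatrix}\right)$, and you replace the closed-form evaluation of double sums by the commuting-shift encoding of $\mathfrak{d}$. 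The two computations agree: writing $\hat{h}=\sum_{r,s}h_{r,s}X^{s}Y^{r}$ and $\hat{H}=\sum_{r,s}H_{r,s}X^{s}Y^{r}$, your $B$-equation rearranges (modulo signs you deferred: the $\tau$-contribution to the image of $A$ is $-\frac{\exp(V)-1}{Y}$, and the $\mathrm{ad}_{\sigma}$-terms enter as $-X\hat{H}$ and $-Y\hat{H}$) into exactly the generating-series form of (\ref{eq:pregenseries}). Your route buys three things: the full $\mathfrak{d}\to\mathfrak{d}$ block yields all coefficients $H^{*}_{u,v}=h_{B}^{u}h_{A}^{v}/u!v!$ at once, where the paper only reads off two; the kernels $\frac{\exp(U+V)-1}{U+V}$ and its divided differences arise structurally rather than as ODE solutions that must be guessed; and regularity of every series is visible throughout. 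It also handles honestly a point the paper elides: the relations used (vanishing of products of two $\mathrm{ad}_{\sigma}$'s, etc.) are identities of operators on $\mathfrak{w}$, not of elements of the abstract enveloping algebra, so one must know that coefficients can be read off from the operator realisation --- your separation-of-blocks remark is precisely the needed linear-independence statement.

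One substantive warning about the final step. Carried out to the end, your matching gives
\begin{equation*}
\hat{h}=\frac{U+V}{\exp(U+V)-1}\,\hat{H}-h_{A}h_{B}\cdot\frac{1}{U}\left(1-\frac{U+V}{V}\cdot\frac{\exp(V)-1}{\exp(U+V)-1}\right),\qquad U=h_{A}X,\quad V=h_{B}Y,
\end{equation*}
i.e.\ the kernel carries $\exp(V)-1$, not the printed $\exp(U)-1$. This is not a defect of your argument: the statement's $T$, as printed, is not even a formal power series (its singular part begins $\frac{1}{U}-\frac{1}{V}$, so $-h_{A}h_{B}T(h_{A}X,h_{B}Y)$ contains $-\frac{h_{B}}{X}+\frac{h_{A}}{Y}$, which nothing in the Bernoulli convolution can cancel), so the printed $T$ is a typo. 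The same slip occurs inside the paper's own proof, where the identity $\sum_{r,s}\frac{X^{s}Y^{r}}{(r+s+2)r!(s+1)!}=\frac{1}{X}\bigl(\frac{\exp(X+Y)-1}{X+Y}-\frac{\exp(X)-1}{X}\bigr)$ should have subtrahend $\frac{\exp(Y)-1}{Y}$ (as printed, the right-hand side is again not a power series), and matching constant and linear coefficients forces the standard normalisation $\sum_{n}\frac{B_{n}}{n!}t^{n}=\frac{t}{\exp(t)-1}$ rather than the convention of (\ref{eq:bernoulli}). So your closing ``care-point'' about reconciling $\exp(U)$ versus $\exp(V)$ is exactly the right instinct, but the resolution is to correct the statement, not to search for a convention under which $\exp(U)-1$ works; and you should drop the suggestion that the poles of the printed $T$ ``cancel against'' the $\frac{\exp(V)-1}{Y}$ and $e^{V}\frac{\exp(U)-1}{X}$ terms --- those cancellations occur inside the derivation, within the divided differences, and the correct kernel is itself pole-free.
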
 
\begin{proof}
    An inspection of the coefficients of $\tau_{1,0}$ and $\tau_{0,1}$ in $\exp(\log(H))$ reveals the identities for $h_A$ and $h_B$. The rest of the claim is proved by studying the coefficient of $\mathrm{ad}_{\sigma_{r,s}}$. To begin, this yields \begin{align*}
        H_{r,s}&=\sum_{\ell=0}^{\infty}\frac{1}{\ell !}\left\{\text{coeff. of }\mathrm{ad}_{\sigma_{r,s}}\text{ in }\left(h_{A}\mathrm{ad}_{A}+h_{B}\mathrm{ad}_{B}+\sum_{r,s\geq 0}h_{r,s}\mathrm{ad}_{\sigma_{r,s}}\right)^{\ell}\right\}\\
        &=\sum_{\ell=0}^{\infty}\frac{1}{\ell !}\left\{\text{coeff. of }\mathrm{ad}_{\sigma_{r,s}}\text{ in }\left(\sum_{u,v\geq 0}h_{u,v}\mathrm{ad}_{\sigma_{u,v}}\right)^{\ell}+\sum_{\substack{u\leq r\\v \leq s}}h_{u,v}\times \text{ coeff. of }\tau_{r-u,s-v}\text{ in }\left(h_{A}\mathrm{ad}_{A}+h_{B}\mathrm{ad}_{B}\right)^{\ell-1}\right\}.
    \end{align*} But \begin{equation*}
        \left(h_{A}\mathrm{ad}_{A}+h_{B}\mathrm{ad}_{B}\right)^{\ell}=\sum_{i+j=\ell}h_{A}^{j}h_{B}^{i}\times\sum_{\substack{\text{words }w\\\text{in }A, B \text{ with }\\j A\text{s, }i B\text{s}}}\mathrm{ad}_{w},
    \end{equation*} where \begin{equation}\label{eq:adaverage}
        \sum_{\substack{\text{words }w\\\text{in }A, B \text{ with }\\j As, i Bs}}\mathrm{ad}_{w}=\binom{i+j}{i,j}\tau_{i,j}+\binom{i+j-1}{i-1,j}\mathrm{ad}_{\sigma_{i-1,j-1}}.
    \end{equation} We prove this via induction by writing \begin{align*}
        \sum_{\substack{\text{words }w\\\text{in }A, B \text{ with }\\j As, i Bs}}\mathrm{ad}_{w}&=\sum_{\substack{\text{words }w\\\text{in }A, B \text{ with }\\j-1 As, i Bs}}\mathrm{ad}_{w}\mathrm{ad}_{A}+\sum_{\substack{\text{words }w\\\text{in }A, B \text{ with }\\j As, i-1 Bs}}\mathrm{ad}_{w}\mathrm{ad}_{B},
    \end{align*} then substituting in (\ref{eq:adaverage}) and using relation $\tau_{i-1,j}\mathrm{ad}_{B}=\tau_{i,j}+\sigma_{i-1,j-1}$ for $i\geq 1$, with the convention that $\sigma_{i-1,j-1}=0$ when $j=0$.
    
    Then the coefficient of $\mathrm{ad}_{\sigma_{r,s}}$ in $\left(h_A \mathrm{ad}_{A}+h_{B}\mathrm{ad}_{B}\right)^{\ell}$ is \begin{equation*}
    \binom{r+s+1}{r,s+1}h_{A}^{s+1}h_{B}^{r+1}
    \end{equation*} if $r+s+2=\ell$, and is otherwise zero. Similarly, the coefficient of $\tau_{r-u,s-v}$ in $\left(h_A \mathrm{ad}_{A}+h_{B}\mathrm{ad}_{B}\right)^{\ell-1}$ is \begin{equation*}
        \binom{r-u+s-v}{r-u,s-v}h_{A}^{r-u}h_{B}^{s-v}
    \end{equation*} if $r-u+s-v=\ell-1$ and is otherwise zero. Therefore \begin{equation}\label{eq:pregenseries}
        H_{r,s}=\frac{h_{B}^{r+1}h_{A}^{s+1}}{(r+s+2)r! (s+1)!}+\sum_{\substack{0\leq u\leq r\\0\leq v \leq s}}\frac{h_{u,v}h_{B}^{r-u}h_{A}^{s-v}}{(r-u+s-v+1)(r-u)!(s-v)!}.
    \end{equation} We note that \begin{equation*}
        \sum_{r,s\geq 0}\frac{X^s Y^r}{(r+s+2)r!(s+1)!}=\frac{1}{X}\left(\frac{\exp(X+Y)-1}{X+Y}-\frac{\exp(X)-1}{X}\right)
    \end{equation*} because both sides are holomorphic functions near the origin which satisfy the differential equation \begin{equation*}
        \left(X\partial_X+Y\partial_Y+2\right)F=\frac{\exp(Y)(\exp(X)-1)}{X}.
    \end{equation*} The same reasoning applies to prove \begin{equation*}
        \sum_{r,s\geq 0}\frac{X^s Y^r}{(r+s+1)r!s!}=\frac{\exp(X+Y)-1}{X+Y}
    \end{equation*} where the differential equation is \begin{equation*}
    \left(X\partial_X+Y\partial Y+1\right)F=\exp(X+Y)
    \end{equation*}
    Then the proposition is proved upon passing to generating series in (\ref{eq:pregenseries})), and using the fact that \begin{equation*}
        \frac{\exp(X+Y)-1}{X+Y}=\sum_{r,s \geq 0}\frac{B_{r,s}}{r!s!}X^r Y^s.\qedhere
    \end{equation*}
\end{proof}
So we have proved that the function $\log(p^{cr}_{\nabla})(x)$ appearing in (\ref{eq:logdR}) is of the form \begin{equation*}
    \log(p^{cr}_{\nabla})(x)=\int_{b}^{x}\alpha A+\int_{b}^{x} \beta B+\sum_{r,s\geq 0}g_{r,s}\sigma_{r,s},
\end{equation*} where the $g_{r,s}$ are given in terms of the $G_{u,v}$ by Proposition \ref{prop:logarithm} and the $G_{u,v}$ are given in terms of the functions $p_n$ by (\ref{eq:coeffsofflat}). 

Now the proof of Theorem \ref{thm:periodmap} comes down to calculating the composition of $\iota_{KZB}$ with the period map.
\begin{proof}[Proof of Theorem \ref{thm:periodmap}]
   We begin by recalling a theorem of Kurlin \cite{kurlin}, which shows how the Baker--Campbell--Hausdorff formula simplifies for the maximal metabelian Lie algebra on two generators:\begin{equation}\label{eq:kurlinBCH}
      \log\left(e^Xe^Y\right)= X+Y+\frac{1}{\mathrm{ad}_{Y}}\left(1-\frac{\exp\left(\mathrm{ad}_{X}\right)-1}{\mathrm{ad}_{X}}\frac{\mathrm{ad}_{X}+\mathrm{ad}_{Y}}{\exp\left(\mathrm{ad}_{X}+\mathrm{ad}_{Y}\right)-1}\right)[X,Y].
   \end{equation} But $[\sigma_{r,s},[X,Y]]=0$ so when we set $X=-\int_{b}^{x}\beta B$ and $Y=\log(p_{\nabla}^{cr})(x)$, we can replace the $\mathrm{ad}_{Y}$s in (\ref{eq:kurlinBCH}) with $\int_{b}^{x}\alpha \mathrm{ad}_{A}+\int_{b}^{x}\beta \mathrm{ad}_{B}$. This yields the formula (\ref{eq:periodeq}).
   \end{proof}
    We conclude by describing the period map for a tangential basepoint $b$ at $\infty$. We use the notations $\alpha$, $\beta$, $e_n$ and $P_n$ from the introduction, and write $Y$ for the open subset of $E$ obtained by removing the points with $y=0$. Let $\beta^{'}$ be the one-form $\beta+dP_1$ and recall that at (\ref{eq:coordonperiod}) we constructed from the KZB connection a coordinate $\iota_{KZB}$ on the de Rham period space. We will assume that the endpoint of the de Rham period map is an integral point on $X$ so that we do not need to choose a branch of the $p$-adic logarithm. With respect to applications to Selmer schemes, this is by no means a severe restriction.
    \begin{thm}\label{thm:tangperiodmap}
        For $x \in X(K)\cap Y(K)$, we have \begin{equation*}
\iota_{KZB}\circ j^{dR}(X,b)(x)=\int_{b}^{x}\alpha A+\sum_{r,s\geq 0}g_{r,s}\sigma_{r,s}-\int_{b}^{x}\beta^{\prime} S\left(\int_{b}^{x}\alpha\mathrm{ad}_{A}, \int_{b}^{x}\beta^{\prime}\mathrm{ad}_{B}\right)\left(-\int_{b}^{x}\alpha \sigma_{0,0}+\sum_{r,s\geq 0}g_{r,s}\sigma_{r+1,s}\right)
   \end{equation*} where $j^{dR}(X,b)(x)$ is the de Rham period map and \begin{equation*}
       S(U,V)=\frac{1}{U+V}\left(1-\frac{\exp(-V)-1}{V}\frac{U+V}{\exp(U+V)-1}\right).
   \end{equation*} The $g_{r,s}$ are iterated integrals of certain algebraic functions on $E/K$, characterised by the equality of formal power series \begin{equation*}
       \sum_{r,s\geq 0}g_{r,s} X^s Y^r=\sum_{r,s\geq 0}\left(\sum_{\substack{u+n=r\\v+m=s}}\frac{B_{u+v}\left(\int_{b}^{x}\beta^{'}\right)^u\left(\int_{b}^{x}\alpha\right)^v G_{n,m}}{u!v!}\right)X^s Y^r-\left(\int_{b}^{x}\alpha\right) \left(\int_{b}^{x}\beta^{'}\right) T\left(\int_{b}^{x}\alpha X, \int_{b}^{x}\beta^{'} Y\right),
   \end{equation*} where \begin{equation*}
       T(U,V)=\frac{1}{U}\left(1-\frac{U+V}{V}\frac{\exp(U)-1}{\exp(U+V)-1}\right)
   \end{equation*} and $B_{m}$ is the $m$th Bernoulli number from (\ref{eq:bernoulli}). The algebraic iterated integrals $G_{r,s}$ are in turn characterised as the iterated integral of the coefficient of $X^r Y^s$ in 
\begin{equation*}
     -\left(1-\alpha Y-\beta^{'} X\right)^{-1}\left(1+(1-\beta^{'} X)\sum_{i\geq 0}\sum_{k=1}^{i}\left(\beta^{'}\right)^{i-k+1}\alpha q_k X^i\right),
\end{equation*} where we define the algebraic functions $q_k$ by \begin{equation*}
q_n=\sum_{a_1+2a_2+\dots+na_n=n}\frac{1}{a_1 !a_2 !\dots a_n!}\prod_{k=1}^{n}\left(\frac{(-1)^{k+1}P_k}{k}\right)^{a_k}.
    \end{equation*}
    \end{thm}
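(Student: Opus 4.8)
The plan is to rerun the proof of Theorem \ref{thm:periodmap}, replacing the connection $\nabla_{KZB}$ on $X$ by its logarithmic extension $\nabla_{KZB}^{\prime}$ on $Y$; the extra generality of Propositions \ref{prop:flatad} and \ref{prop:logarithm}, in which $p_1$ was permitted to be nonzero, is precisely what licenses this substitution. First I would pin down the tangential-basepoint framework. Because $b$ is a tangential basepoint at $\infty$ and $x \in X(K) \cap Y(K)$ is assumed integral, the discussion following (\ref{eq:logdR}) shows that $p_{\nabla_{KZB}^{\prime}}^{cr}(x)$ is computed directly from $\nabla_{KZB}^{\prime}$, without choosing a branch of the $p$-adic logarithm: it is the value at $x$ of the flat section of $\nabla_{KZB}^{\prime}$ normalized against the fibre of the residue connection at $\infty$ (whose residue is $[B,A]$). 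The symbols $\int_b^x$ below are then the corresponding regularized iterated integrals, which are unambiguous under the integrality hypothesis.

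Next I would compute the flat section. By (\ref{eq:algKZB}) we have $\nabla_{KZB}^{\prime} = d - \beta^{\prime} B - \alpha A - \alpha \sum_{k \geq 1} q_k \mathrm{ad}_B^k(A)$, with $\beta^{\prime} = \beta - df$ and $q_1 = P_1 = -f$. Passing to $\mathrm{Ad}(\nabla_{KZB}^{\prime})$, Proposition \ref{prop:flatad} applies verbatim under the substitutions $\beta \mapsto \beta^{\prime}$ and $p_k \mapsto q_k$, the sum over $k$ now genuinely beginning at $k = 1$; this yields the asserted generating series for the $G_{r,s}$. Feeding these into Proposition \ref{prop:logarithm} with $h_B = \int_b^x \beta^{\prime}$ and $h_A = \int_b^x \alpha$ produces the stated power series for the $g_{r,s}$, so that
\begin{equation*}
\log(p_{\nabla_{KZB}^{\prime}}^{cr})(x) = \int_b^x \alpha\, A + \int_b^x \beta^{\prime}\, B + \sum_{r,s \geq 0} g_{r,s}\, \sigma_{r,s}.
\end{equation*}

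Finally I would apply the coordinate and Kurlin's formula. The Hodge filtration on $\nabla_{KZB}^{\prime}$ is the filtration $(F^{\prime})^{\bullet}$ built in the proof of Proposition \ref{prop:hodgefilt}, so its $F^0$ is again spanned in $\mathfrak{w}$ by $B$; hence the coordinate $\iota_{KZB}$ of (\ref{eq:coordonperiod}) subtracts the $B$-component of $\log(p_{\nabla_{KZB}^{\prime}}^{cr})(x)$, which is now $\int_b^x \beta^{\prime}$. Setting $X = -\int_b^x \beta^{\prime}\, B$ and $Y = \log(p_{\nabla_{KZB}^{\prime}}^{cr})(x)$ in (\ref{eq:kurlinBCH}), and using $[\sigma_{r,s},[X,Y]] = 0$ to replace every $\mathrm{ad}_Y$ by $\int_b^x \alpha\, \mathrm{ad}_A + \int_b^x \beta^{\prime}\, \mathrm{ad}_B$ exactly as in the proof of Theorem \ref{thm:periodmap}, delivers the formula with $S$ as claimed.

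The hard part will be the first step: verifying that, under the integrality assumption, the tangential crystalline path is genuinely recovered as the residue-normalized flat section of $\nabla_{KZB}^{\prime}$, and that the regularized iterated integrals $\int_b^x$ against $\alpha$ and $\beta^{\prime}$ obey the same formal recurrences as ordinary iterated integrals, so that Propositions \ref{prop:flatad} and \ref{prop:logarithm} transfer unchanged. Once this bookkeeping with the residue connection and the tangential normalization is in place, the remainder is a mechanical rerun of the rational-basepoint argument with the pair $(\beta, p_k)$ everywhere replaced by $(\beta^{\prime}, q_k)$.
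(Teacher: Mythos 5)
Your proposal is correct and follows essentially the same route as the paper, whose proof of Theorem \ref{thm:tangperiodmap} is literally the one-line instruction to rerun Theorem \ref{thm:periodmap} with the KZB connection over $Y$ from (\ref{eq:algKZB}) in place of the one over $X$ --- i.e.\ exactly your substitution $(\beta, p_k) \mapsto (\beta^{\prime}, q_k)$, which is licensed by the deliberate extra generality ($p_1$ not assumed zero) of Proposition \ref{prop:flatad}. The tangential-basepoint bookkeeping you flag as the hard part is, in the paper, already absorbed into the Section 2 framework (residue connection, integrality of the endpoint removing any choice of branch of the $p$-adic logarithm), so your expanded write-up is a faithful elaboration rather than a different argument.
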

    \begin{proof}
        The proof is exactly the same as that of Theorem \ref{thm:periodmap}, except that we replace the KZB connection over $X$ throughout by the KZB connection over $Y$, as it appears at (\ref{eq:algKZB}).
    \end{proof}
\section*{Acknowledgements}
It is a great pleasure to express my gratitude to Minhyong Kim for over two years' worth of guidance on Selmer schemes and related topics, as well as his continued patience as the fruits of these discussions slowly coalesce into definitive results.

I have also benefited greatly from numerous discussions with Rob de Jeu and Herbert Gangl on the subject of de Rham period maps and polylogarithms. Special thanks are also due to Richard Hain and Federico Zerbini for clarifying some points about the KZB connection.

The author would like to thank the Isaac Newton Institute for Mathematical Sciences, Cambridge, for support and hospitality during the programme \emph{K-theory, algebraic cycles and motivic homotopy theory} where work on this paper was undertaken. This work was supported by EPSRC grant no EP/R014604/1.

The author would also to acknowledge the support of the University of Warwick through its Chancellor's International Scholarship. In addition, the author is grateful for the visiting studentship provided by the University of Edinburgh.

\printbibliography
\end{document}